\newtheorem{Theorem}{Theorem}[section]
\newtheorem{Proposition}[Theorem]{Proposition} 
\newtheorem{Lemma}[Theorem]{Lemma}
\theoremstyle{definition}
\newtheorem{Remark}[Theorem]{Remark}
\newcommand{\p}{{\mathbb{P}^1}}
\newcommand{\ctimes}{\widetilde{\times}}
\newcommand{\gr}{\phantom{}_0\Gr^\ul}
\newcommand{\grt}{\phantom{}_0\widetilde{\Gr}^\ul}
\newcommand{\Vp}{\phantom{}_0 V}
\newcommand{\co}{\mathrm{Conf}}
\newcommand{\fl}{\mathcal{F}}
\newcommand{\tg}{\widetilde{\mathfrak{gl}}_N}
\newcommand{\supp}{\operatorname{supp}}
\newcommand{\C}{\mathbb{C}}
\newcommand{\spn}{\operatorname{Span}}
\newcommand{\Gr}{\mathrm{Gr}}
\newcommand{\K}{\mathcal{K}}
\newcommand{\edge}{\longrightarrow}
\renewcommand{\O}{\mathcal O}
\newcommand{\ul}{{\underline{\lambda}}}
\newcommand{\um}{{\underline{\mu}}}
\newcommand{\lam}{\lambda}
\newcommand{\Z}{\mathbb{Z}}
\newcommand{\Xnreg}{X^n_{\text{reg}}}
\newcommand{\Cnreg}{\C^n_{\text{reg}}}
\newcommand{\Pnreg}{\p^n_{\text{reg}}}
\begin{document}

\title[The BD Grassmannian and knot homology]{The Beilinson-Drinfeld Grassmannian and symplectic knot homology}

\author{Joel Kamnitzer}
\address{University of Toronto, Toronto, Canada}
\date{\today}

\begin{abstract}
Seidel-Smith and Manolescu constructed knot homology theories using symplectic fibrations whose total spaces were certain varieties of matrices.  These knot homology theories were associated to $SL(n) $ and tensor products of the standard and dual representations.  In this paper, we place their geometric setups in a natural, general framework.   For any complex reductive group and any sequence of minuscule dominant weights, we construct a fibration of affine varieties over a configuration space.  The middle cohomology of these varieties is isomorphic to the space of invariants in the corresponding tensor product of representations.  Our construction uses the Beilinson-Drinfeld Grassmannian and the geometric Satake correspondence.
\end{abstract}
\maketitle

\section{Introduction}
Let $ G $ be a complex reductive group.  For any $ n $-tuple $\ul =(\lambda_1, \dots, \lambda_n) $  of dominant weights of $ G$, consider the space of invariants $ \phantom{}_0 V^\ul := (V^{\lambda_1} \otimes \dots \otimes V^{\lambda_n})^G $.  This representation carries an action of the symmetric group by permuting the tensor factors.  More precisely, the group which acts is the stabilizer $ \Sigma_\ul $ of $ \ul $ under the action of the symmetric group $ \Sigma_n $ (so that if all $ \lambda_i $ are equal, then $ \Sigma_\ul = \Sigma_n $).

These representations of symmetric groups are of interest from the point of view of knot invariants.  Via quantum groups and R-matrices, these representations can be deformed to representations of braid groups.  This construction leads to knot invariants, such as the Jones polynomial.  Roughly speaking, the invariant of a knot $ K $ is the trace of a braid (in such a representation) whose closure is $K$.  

Khovanov \cite{K} has proposed categorifying these representations.  This means finding a triangulation category $ \phantom{}_0 \mathcal{D}^\ul $, with an action of an appropriate braid group, such that the Grothendieck group of $ \phantom{}_0 \mathcal{D}^\ul $ is isomorphic to $ \Vp^\ul $ with the above action of $ \Sigma_\ul $.  From these categorifications, Khovanov explained how to obtain more refined knot invariants by computing an appropriate categorical ``trace''.  (Actually Khovanov has proposed categorifying the braid group representations coming from quantum groups.  However, in this paper, we will just focus on categorifying the symmetric group representations.  Restricting to the categorification of these symmetric group representations is sufficient to get non-trivial braid group representations and knot invariants.)

The construction of these categorifications has thus far proceeded in a rather ``ad-hoc'' manner in different special cases, most notably when $ G = SL_2 $ or $ SL_m $.

One approach was taken by Seidel-Smith \cite{SS} and then extended by Manolescu \cite{M}.  They considered the cases when $ G = SL_2 $ \cite{SS} or $ G = SL_m $ \cite{M} and where $ \ul = (\omega_1, \dots, \omega_1, \omega_{m-1}, \dots, \omega_{m-1}) $, a tensor product of standard and dual representations.  In this case, they constructed a symplectic fibration $ S \rightarrow \C^n_\mathrm{reg} / \Sigma_\ul $.  Here $ S $ is a certain variety of $ nm \times nm $ matrices.  They proved that there is a well-defined (up to Hamiltonian isotopy) action of the braid group $ \pi_1(\C^n_\mathrm{reg} / \Sigma_\ul) $ on Lagrangian submanifolds in a fixed fibre.  The braid group acts by parallel transport through the fibration.  

The Lagrangians are objects in the derived Fukaya category.  So philosophically, this means that they constructed an action of the braid group on the derived Fukaya category and this category can be considered as $ \phantom{}_0 \mathcal{D}^\ul $ (recently, this perspective has been pursued by Reza Rezazadegan \cite{R}).  As supporting evidence, the middle cohomology of a fibre is isomorphic to $ \Vp^\ul $ as a representation of $ \Sigma_\ul $ (see \cite[section 3.1]{M}).  

The purpose of this paper is to place the geometric setup of Seidel-Smith and Manolescu in a more general and more natural framework.  For any complex reductive group $ G $ and any sequence of minuscule dominant weights $ \ul $, we construct a fibration of smooth complex affine varieties (and hence symplectic manifolds) $  \gr_{\co_\ul} \rightarrow \co_\ul $, where $ \co_\ul = \Pnreg /\Sigma_\ul $ is a coloured configuration space of points on $ \p $.  

The family $ \gr_{\co_\ul} \rightarrow \co_\ul $ is a type of Beilinson-Drinfeld Grassmannian \cite{BD}.  It is a moduli space of Hecke modifications of principal $G^\vee$ bundles on $ \p$ , where $ G^\vee $ denotes the Langlands dual group.  The fibre over a point $ x = [x_1, \dots, x_n] $ is defined by
\begin{equation*}
\begin{aligned}
\gr_x = \Big\{ (P, &\phi) : P \text{ is a principal } G^\vee \text{ bundle on } \p, \\
& \phi : P_0|_{X \smallsetminus \{x_1, \dots, x_n \}} \rightarrow P|_{X \smallsetminus \{x_1, \dots, x_n \}} \text{ is an isomorphism, }\\
& \text{$\phi$ is of Hecke type  $ \lam_i $ at $ x_i $, for each $ i $,} \\
& \text{and $ P $ is trivial.} \Big\}
\end{aligned}
\end{equation*}

We prove the following facts about this fibration.
\begin{enumerate}
\item The action (by monodromy) of $ \pi_1(\co_\ul) $ on the cohomology of a fibre factors through the group $ \Sigma_\ul$.  Moreover there is a $\Sigma_\ul $ equivariant isomorphism $ H^{\mathrm{mid}}(\gr_x) \cong \phantom{}_0 V^\ul $  (Proposition \ref{th:monod2}).
\item When two points come together in the base corresponding to dual weights, there is a local statement entirely analogous to lemmas in \cite{M} and \cite{SS} (Lemma \ref{th:semilocal}).

\item In the case when $ G = SL_m $ and $ (\lambda_1, \dots, \lambda_n) = (1, \dots, 1, m-1, \dots, m-1) $, then the portion of the fibration lying over points in $ \C $ is isomorphic to the fibration studied by Manolescu, which in turn reduces to the fibration studied by Seidel-Smith when $ m=2 $ (Theorem \ref{th:comp}).
\end{enumerate}

We prove statement (i) in section 2 as a consequence of the geometric Satake correspondence of Mirkovic-Vilonen \cite{MVi}.  In section 3, we prove statement (ii) as a consquence of the factorization property of the Beilinson-Drinfeld Grassmannian.  It is interesting to see how this key technical lemma of \cite{M} and \cite{SS} follows extremely naturally and easily in this setting.  In section 4, we prove statement (iii) following ideas of Mirkovic-Vybornov \cite{MVy} and Ngo \cite{N}.

In a future work, we hope to use this setup to define an action of the braid group on the Fukaya category of the fibres (categorifying the action of $ \Sigma_\ul $ on the middle cohomology) and then to construct homological knot invariants, following the approach of Seidel-Smith and Manolescu.

There is a close connection between this paper and the algebraic geometry approach to knot homology pursued jointly with Sabin Cautis in \cite{CK}.  We expect that the two constructions are related by hyperK\"ahler rotation.  The hyperK\"ahler structure of the varieties $\gr_x $ is described by Kapustin-Witten in sections 10.2 and 10.3 of \cite{KM}.  

\subsection*{Acknowledgements} 
Despite its short length, producing this paper required consultations with many  mathematicians.  In particular, I would like to thank Denis Auroux, Roman Bezrukavnikov, Alexander Braverman, Sabin Cautis, Edward Frenkel, Dennis Gaitsgory, Reimundo Heluani, Anton Kapustin, Ciprian Manolescu, Carl Mautner, Paul Seidel, Ivan Smith, Constantin Teleman, Edward Witten, Christopher Woodward, and Xinwen Zhu for helpful discussions.  During the course of this work, I was supported by a fellowship from the American Institute of Mathematics and by an NSERC Discovery Grant.

\section{The fibration}
We begin by reviewing different versions of the Beilinson-Drinfeld Grassmannian and exploring their properties using the geometric Satake correspondence.  These varieties were introduced by Beilinson-Drinfeld in \cite{BD}.

\subsection{The affine Grassmannian} Let $ G $ be a complex reductive group and let $ G^\vee $ be its Langlands dual group.  Let $ \Lambda $ denote the set of weights of $ G $ which is the same as the set of coweights of $ G^\vee $.  Let $ \Lambda_+ $ denote the subset of dominant weights.

Let $ \K = \C((t)) $ and $ \O = \C[[t]]$.  The affine Grassmannian $ \Gr $ of $ G^\vee $ is defined as $ G^\vee(\K)/G^\vee(\O)$.  The $ G^\vee(\O) $ orbits on $ \Gr $ are labelled by dominant weights of $ G $.  We write $ \Gr^\lambda $ for the $ G^\vee(\O) $ orbit through $ t^\lambda $, for $ \lambda \in \Lambda_+ $. Let $ L_0 = t^0 $ denote the identity coset in $ \Gr $.  These orbits are closed in $ \Gr $ (and hence projective) if and only if $ \lambda $ is a minuscule weight.  More generally, we have that $ \overline{\Gr^\lambda} = \bigcup_{\mu \le \lambda} \Gr^\mu $, where $ \mu  \le \lambda $ means that $ \mu $ is a dominant weight and $ \lambda - \mu $ is a sum of positive roots.  The smooth locus of $ \overline{\Gr^\lambda} $ is exactly $ \Gr^\lambda $.  In particular, $ \overline{\Gr^\lambda} $ is smooth iff $ \lambda $ is minuscule.

Similarly, the $ G^\vee(\K) $ orbits on $ \Gr \times \Gr $ are also labelled by $ \Lambda_+ $ and we write $ L_1 \overset{\lambda}{\edge} L_2 $ if $ (L_1,L_2) $ is in the same orbit as $ (L_0, t^\lambda) $.  

Let $ \underline{\lambda} = (\lambda_1, \dots, \lambda_k) $ be a $ k $ tuple of dominant weights of $ G $.  Then we define the local convolution Grassmannian as
\begin{equation*}
\widetilde{\Gr}^\ul = \{ (L_1, \dots, L_k) \in \Gr^k : L_0 \overset{\lambda_1}{\edge} L_1 \overset{\lambda_2}{\edge} \cdots \overset{\lambda_{k-1}}{\edge} L_{k-1} \overset{\lambda_k}{\edge} L_k \}
\end{equation*}
There is a map $ m_{\ul}: \widetilde{\Gr}^\ul \rightarrow \Gr $ with $ (L_1, \dots, L_k) \mapsto L_k $.  The image of $ m_\ul $ is $ \overline{\Gr^{\lambda_1 + \dots + \lambda_k}}$.

The following result follows from the geometric Satake correspondence \cite{MVi}.
\begin{Theorem} \label{th:geomsat}
Assume that all $ \lambda_i $ are minuscule.  There are canonical isomorphisms 
\begin{enumerate} 
\item $H^*(\Gr^\lambda) \cong V_\lambda$,
\item $H^*(\widetilde{\Gr}^\ul) \cong V_{\lambda_1} \otimes \cdots \otimes V_{\lambda_n}$, and 
\item $H^{\mathrm{top}}(m_{\ul}^{-1}(L_0)) \cong (V_{\lambda_1} \otimes \cdots \otimes V_{\lambda_n})^G$. 
\end{enumerate}
Moreover, the isomorphisms in (ii), (iii) are compatible in the sense that the diagram
\begin{equation*}
\begin{CD}
H^*(\widetilde{\Gr}^\ul) @>\sim>> V_{\lambda_1} \otimes \cdots \otimes V_{\lambda_n} \\
@VVV @VVV \\
H^{\mathrm{top}}(m_{\ul}^{-1}(L_0)) @>\sim>> (V_{\lambda_1} \otimes \cdots \otimes V_{\lambda_n})^G
\end{CD}
\end{equation*}
commutes, where the left vertical map comes from the inclusion $ m_{\ul}^{-1}(L_0) \rightarrow \widetilde{\Gr}^\ul$.
\end{Theorem}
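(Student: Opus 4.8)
The plan is to derive all three isomorphisms, together with the compatibility of (ii) and (iii), directly from the geometric Satake equivalence of Mirkovic--Vilonen \cite{MVi}. Recall that Satake provides a monoidal equivalence between the category of $G^\vee(\O)$-equivariant (``spherical'') perverse sheaves on $\Gr$, with the convolution product $\star$, and the category of finite-dimensional representations of $G$; under it the intersection cohomology sheaf $\mathrm{IC}_\mu := \mathrm{IC}(\overline{\Gr^\mu})$ corresponds to $V_\mu$, the convolution product to the tensor product, and the hypercohomology functor $H^*(\Gr,-)$ to the forgetful functor to vector spaces. Statement (i) is then immediate: since $\lambda$ is minuscule, $\overline{\Gr^\lambda} = \Gr^\lambda$ is smooth and projective, so $\mathrm{IC}_\lambda = \underline{\C}_{\Gr^\lambda}[d_\lambda]$ with $d_\lambda = \dim \Gr^\lambda$, and taking hypercohomology gives $H^*(\Gr^\lambda)[d_\lambda] \cong V_\lambda$.

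For (ii), note that $\widetilde{\Gr}^\ul$ is an iterated fibration whose successive fibres are the smooth projective varieties $\Gr^{\lambda_i}$; hence $\widetilde{\Gr}^\ul$ is itself smooth and projective, of dimension $d := \sum_i d_{\lambda_i}$, and the twisted external product $\mathrm{IC}_{\lambda_1} \widetilde{\boxtimes} \cdots \widetilde{\boxtimes} \mathrm{IC}_{\lambda_n}$ is simply $\underline{\C}_{\widetilde{\Gr}^\ul}[d]$. By definition of the convolution product, $(m_\ul)_* \underline{\C}_{\widetilde{\Gr}^\ul}[d] = \mathrm{IC}_{\lambda_1} \star \cdots \star \mathrm{IC}_{\lambda_n}$, which corresponds under Satake to $V_{\lambda_1} \otimes \cdots \otimes V_{\lambda_n}$. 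Applying $H^*(\Gr,-)$ and using $H^*(\Gr, (m_\ul)_* -) = H^*(\widetilde{\Gr}^\ul, -)$ yields (ii).

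For (iii), I would use the semisimplicity of the spherical category together with the fact that convolution preserves perversity (the geometric heart of Satake): the perverse sheaf $\mathcal{A} := (m_\ul)_* \underline{\C}_{\widetilde{\Gr}^\ul}[d]$ decomposes as $\mathcal{A} \cong \bigoplus_\mu \mathrm{IC}_\mu \otimes N_\mu$ for vector spaces $N_\mu$, and comparing with (ii) via Satake gives $N_\mu \cong \mathrm{Hom}_G(V_\mu, V_{\lambda_1} \otimes \cdots \otimes V_{\lambda_n})$, so in particular $N_0 \cong (V_{\lambda_1} \otimes \cdots \otimes V_{\lambda_n})^G$. Restricting to the stalk at $L_0$ and applying proper base change along $m_\ul^{-1}(L_0) \hookrightarrow \widetilde{\Gr}^\ul$ gives $R\Gamma(m_\ul^{-1}(L_0))[d] \cong \bigoplus_\mu (i_{L_0}^* \mathrm{IC}_\mu) \otimes N_\mu$. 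Now $i_{L_0}^* \mathrm{IC}_0 = i_{L_0}^* \delta_{L_0} = \C$, while for $\mu \neq 0$ the point $L_0 \in \Gr^0$ lies in the boundary of $\overline{\Gr^\mu}$, so the support condition for IC sheaves forces $\mathcal{H}^0(i_{L_0}^* \mathrm{IC}_\mu) = 0$. Taking $\mathcal{H}^0$ of both sides therefore gives $H^d(m_\ul^{-1}(L_0)) \cong N_0 \cong (V_{\lambda_1} \otimes \cdots \otimes V_{\lambda_n})^G$; by semismallness of $m_\ul$ this degree $d$ is the top degree $2\dim m_\ul^{-1}(L_0)$ precisely when the invariants are non-zero (and both sides vanish otherwise), which is (iii).

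To obtain the commuting square, note that the restriction map $H^*(\widetilde{\Gr}^\ul) \to H^*(m_\ul^{-1}(L_0))$ is $H^*(\Gr,-)$ applied to the adjunction morphism $\mathcal{A} \to i_{L_0 *} i_{L_0}^* \mathcal{A}$. Under the decomposition of $\mathcal{A}$ this is the sum over $\mu$ of the adjunctions $\mathrm{IC}_\mu \to i_{L_0 *} i_{L_0}^* \mathrm{IC}_\mu$ tensored with $N_\mu$, and after taking hypercohomology and passing to top degree only the $\mu = 0$ summand contributes, where the map is the identity of $N_0$. Hence under the Satake isomorphisms the left vertical arrow of the diagram is exactly the canonical $G$-equivariant projection $V_{\lambda_1} \otimes \cdots \otimes V_{\lambda_n} \twoheadrightarrow (V_{\lambda_1} \otimes \cdots \otimes V_{\lambda_n})^G$ onto the trivial isotypic component, so the square commutes. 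The step requiring the most care is the shift-and-duality bookkeeping — tracking the $[d]$'s and distinguishing $*$- from $!$-restrictions so that $H^{\mathrm{top}}$ itself, rather than its dual or a weight space, appears — together with the appeal to semismallness/perversity that concentrates the multiplicity spaces $N_\mu$ in a single cohomological degree; granting these, the entire theorem is a formal consequence of the monoidality and exactness of the Satake functor.
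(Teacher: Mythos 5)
Your proof is correct and follows essentially the same route as the paper: everything is derived from the geometric Satake equivalence, with (i) and (ii) coming directly from the fibre functor and the definition of convolution, and (iii) and the compatibility square coming from the decomposition $(m_\ul)_* \C[d] \cong \bigoplus_\mu \mathrm{IC}_\mu \otimes N_\mu$ together with proper base change, the support conditions at $L_0$, and semismallness. The paper compresses (iii) and the compatibility statement into one line (``this gives us (ii) and (iii)... clear by construction''); your write-up simply makes explicit the stalk-at-$L_0$ computation and the identification of the restriction map with projection onto the $N_0$-summand that the paper leaves implicit.
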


\begin{proof}
The geometric Satake correspodence gives us an equivalence of tensor categories between the category of $ G^\vee(\O) $ equivariant perverse sheaves on $ \Gr $ (called the spherical Hecke category) and the category of representations of $ G $, compatible with the fibre functors to the category of vector spaces \cite[Theorem 7.3]{MVi}.  The fibre functor on the spherical Hecke category is derived global sections.

This equivalence takes $ \C_{\Gr^\lambda}[\dim \Gr^\lambda] $ to $ V_\lambda $ when $ \lambda $ is minuscule.  This immediately gives us (i).  Also from the definition of the tensor product on the spherical Hecke category \cite[Section 4]{MVi}, we see that the equivalence takes $ (m_{\ul})_* \C_{\widetilde{\Gr}^\ul}[\dim \widetilde{\Gr}^\ul] $  to $ V_{\lambda_1} \otimes \cdots \otimes V_{\lambda_n} $.  This gives us (ii) and (iii).  The compatibility statement is clear by construction.
\end{proof}

\subsection{The notion of Hecke type}
Let us now fix a smooth curve $ X $.  Later we will take $ X = \p $.

For any $ x \in X $, and any open set $ U \subset X$ containing $ x $, we define
\begin{equation*}
\begin{aligned}
\Gr_{U, x} := \big\{ (P, \phi) : P &\text{ is a principal } G^\vee \text{ bundle on } U \\ 
&\text{ and } \phi : P_0|_{U \smallsetminus x} \rightarrow P|_{U \smallsetminus x} \text{ is an isomorphism} \big\}.
\end{aligned}
\end{equation*}
here and below, $ P_0 $ denotes the trivial $ G^\vee $ bundle on $ U $.  

Picking a coordinate at $ x $ gives an isomorphism $ \Gr_{U,x} \cong \Gr $.  The isomorphism is well-defined up to the action of $ Aut(\O) $ on $ \Gr $.  Since $\Gr^\lambda $ is $ Aut(\O) $ invariant, we may consider the locus $ \Gr^\lambda_{U,x} $ obtained as the preimage of $ \Gr^\lambda $ under any isomorphism.  An element $ (P, \phi) \in \Gr^\lambda_{U,x} $ is said to have Hecke type $ \lambda$.  Note that $ G^\vee(U) $ acts on $\Gr_{U,x} $ preserving its stratification into Hecke types.

We need to generalize our notion of Hecke type.  Again fix $ x \in X$ and suppose we have two $ G^\vee $ bundles $ (P_1, P_2) $ on $ X $ and an isomorphism $ \phi $ between them over $ X \smallsetminus x $.  Let us pick an open neighbourhood $ U $ of $ x $ on which $ P_1 $ trivializes and pick a trivialization of $P_1 $ on this neighbourhood.  This gives us a point $ L= (P_2|_U, \phi') \in \Gr_{U, x}$ where $ \phi' : P_0|_{U \smallsetminus x} \rightarrow P_2|_{U \smallsetminus x} $ is the composition of this trivialization and $ \phi $.  We say that $ \phi $ has Hecke type $ \lambda $ at $ x $ if $ L  $ has Hecke type $ \lambda $.

Changing the trivialization of $ P_1 $ on $ U $ will change $ L $ by the action of $ G^\vee(U) $ and hence will not change its Hecke type.  Also, changing the open set $ U $ also does not change $ \lambda $, as can be seen by shrinking the open set $ U $.  Hence the Hecke type of $ \phi $ is independent of the choices made in its definition.

For $ X = \p $, there is an alternate characterization of Hecke type.  First, we should note that if $ P $ is a principal $G$-bundle on $ \p$, then we can consider its topological type which will be an element of $ \pi_1(G^\vee) = \Lambda/Q  $ where $ Q $ denotes the coroot lattice of $ G^\vee $.  Using this notion, we have the following result, due to Finkelberg-Mirkovic \cite[Prop 10.2]{FM}.
\begin{Proposition} \label{th:Hecketype}
Let $x, X = \p, \phi, P_1, P_2 $ be as above.  $\phi $ has Hecke type $ \le \lambda $ at $ x $ if and only if 
\begin{enumerate}
\item for all irreps $ V_\beta $ of $ G^\vee $, the map $ \phi $ induces the inclusions
\begin{equation*}
V_\beta^{P_1}(- \langle \beta, \lambda \rangle x) \subset V_\beta^{P_2} \subset V_\beta^{P_1}(\langle \beta, \lambda \rangle x) 
\end{equation*}
(here $V_\beta^P $ denotes the associated vector bundle $ P \times_{G^\vee} V_\beta $) and
\item the topological types of $ P_2 $ and $ P_1 $ differ by $ [\lambda] \in \pi_1(G^\vee) = \Lambda/Q $.
\end{enumerate}
\end{Proposition}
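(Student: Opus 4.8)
The plan is to reduce the global statement over $\p$ to a local one about $\Gr$, and then to recognize conditions (i)--(ii) as the standard combinatorial description of $\overline{\Gr^\lambda}$. First I would choose a neighbourhood $U$ of $x$ and a trivialization of $P_1$ over $U$; this turns $(P_2|_U,\phi')$ into a point $L\in\Gr_{U,x}\cong\Gr$, and, as explained in the text above, ``$\phi$ has Hecke type $\le\lambda$ at $x$'' becomes ``$L\in\overline{\Gr^\lambda}$'', independently of the choices. Under this trivialization $V_\beta^{P_1}$ becomes the trivial bundle near $x$ while $V_\beta^{P_2}$ becomes the lattice $L\cdot(V_\beta\otimes\O)\subset V_\beta\otimes\K$, using the standard description of a point of $\Gr$ by a compatible family of $\O$-lattices, one in $V_\beta\otimes\K$ for each representation $V_\beta$ of $G^\vee$. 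Since $\phi$ is an isomorphism over $X\smallsetminus x$, the three sheaves in (i) agree away from $x$, so (i) is a condition on the stalks at $x$: reading the twist $V(nx)$ as the lattice $t^{-n}V\otimes\O$, it says exactly that $t^{\langle\beta,\lambda\rangle}(V_\beta\otimes\O)\subseteq L\cdot(V_\beta\otimes\O)\subseteq t^{-\langle\beta^*,\lambda\rangle}(V_\beta\otimes\O)$ for all $\beta$ --- the right-hand shift being $\langle\beta^*,\lambda\rangle$, and since under $\beta\mapsto\beta^*$ the right-hand inclusions are the same family as the left-hand ones, it suffices to keep the left inclusion for every representation. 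Condition (ii) says that $L$ lies in the connected component of $\Gr$ indexed by $[\lambda]$ under $\pi_0(\Gr)=\pi_1(G^\vee)$: the component of a point of $\Gr$ records precisely the change in topological type of the $G^\vee$-bundle effected by the corresponding Hecke modification, and that change depends only on the modification near $x$, not on the global type of $P_1$. Thus the proposition reduces to: $L\in\overline{\Gr^\lambda}$ if and only if $L$ satisfies these two local conditions.

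For that, write $L\in\Gr^\mu$ for the unique $\mu\in\Lambda_+$ supplied by the Cartan decomposition, and recall $\overline{\Gr^\lambda}=\bigsqcup_{\mu'\le\lambda}\Gr^{\mu'}$; everything now hinges on showing $L\in\overline{\Gr^\lambda}\iff\mu\le\lambda$. For the forward direction, assume $\mu\le\lambda$. Then $\lambda-\mu$ is a nonnegative sum of positive roots of $G$, hence lies in the coroot lattice $Q$ of $G^\vee$, so $[\mu]=[\lambda]$ and (ii) holds; and since $G^\vee(\O)$ preserves every standard lattice, the lattice sandwich for $L$ reduces to the case $L=t^\mu$, where $t^\mu$ acts on the weight space $V_\beta(\nu)$ by $t^{\langle\nu,\mu\rangle}$, so the extreme exponents of $t^\mu\cdot(V_\beta\otimes\O)$ are $\langle\beta,\mu\rangle$ (at the highest weight of $V_\beta$) and $\langle w_0\beta,\mu\rangle=-\langle\beta,\mu^*\rangle$ (at the lowest weight). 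The first is $\le\langle\beta,\lambda\rangle$ because $\lambda-\mu$ is a nonnegative sum of positive roots and $\beta$ is dominant, and $-\langle\beta,\mu^*\rangle\ge-\langle\beta,\lambda^*\rangle=-\langle\beta^*,\lambda\rangle$ because $\mu^*\le\lambda^*$ (the duality $\mu'\mapsto(\mu')^*$ preserves the dominance order); so both inclusions hold. For the converse, suppose $L$ satisfies (i) and (ii). Condition (ii) gives $\lambda-\mu\in Q$. The left inclusion in (i), reduced via $G^\vee(\O)$-invariance to $L=t^\mu$ and restricted to the highest weight line $V_\beta(\beta)$, gives $\langle\beta,\lambda\rangle\ge\langle\beta,\mu\rangle$, i.e. $\langle\beta,\lambda-\mu\rangle\ge0$, for every dominant weight $\beta$ of $G^\vee$. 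As these $\beta$ generate the dominant coweight cone of $G$ over $\mathbb{Q}_{\ge0}$, this says $\lambda-\mu$ lies in the dual cone, namely the cone of nonnegative real combinations of positive roots of $G$; together with $\lambda-\mu\in Q$ this forces $\lambda-\mu$ to be a nonnegative integer combination of simple roots, i.e. $\mu\le\lambda$, whence $L\in\Gr^\mu\subseteq\overline{\Gr^\lambda}$.

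The main obstacle, to my mind, is not the root-theoretic bookkeeping but the first step: one must check carefully that (i) genuinely localizes at $x$ --- which uses only that $\phi$ is an isomorphism off $x$ together with the lattice picture of $\Gr$ --- and, more delicately, that (ii), a statement about global topological types of $G^\vee$-bundles on $\p$ with $P_1$ not assumed trivial, corresponds exactly to the connected component of $L$ in $\Gr$. Once the problem is localized, the characterization of $\overline{\Gr^\lambda}$ by ``a bound on the extreme weights, plus the correct class in $\pi_1(G^\vee)$'' is the standard description of closures of $G^\vee(\O)$-orbits on $\Gr$ and proceeds as above.
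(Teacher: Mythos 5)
The paper does not actually prove this proposition --- it is cited wholesale from Finkelberg--Mirkovi\'c (Prop.~10.2 of \cite{FM}) --- so there is no in-text argument to compare your proof against. That said, your reduction to the local statement about $\overline{\Gr^\lambda}$ and the root-theoretic bookkeeping are structurally sound, and the Tannakian lattice picture and the identification $\pi_0(\Gr)\cong\pi_1(G^\vee)$ with the change in topological type, though asserted rather than argued, are standard.

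There is, however, one substantive issue you should not slide past. You silently replace the right-hand twist $\langle\beta,\lambda\rangle$ appearing in the statement with $\langle\beta^*,\lambda\rangle$, writing ``the right-hand shift being $\langle\beta^*,\lambda\rangle$.'' These are genuinely different numbers when $\lambda\ne\lambda^*$, and the discrepancy is not cosmetic. Indeed, with the shift $\langle\beta,\lambda\rangle$ as printed, the ``only if'' direction already fails for $L=t^\lambda$: the minimal $t$-exponent in $t^\lambda\cdot(V_\beta\otimes\O)$ is $\langle w_0\beta,\lambda\rangle=-\langle\beta,\lambda^*\rangle$, and the right-hand inclusion would therefore force $\langle\beta,\lambda\rangle\ge\langle\beta,\lambda^*\rangle$ for every dominant $\beta$ of $G^\vee$; by the $\beta\mapsto\beta^*$ symmetry this is possible only when $\lambda=\lambda^*$ (e.g.\ already false for $G^\vee=PGL_3$ and $\lambda=\varpi_1$). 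So the bound as printed in the paper cannot be correct, and your version with $\langle\beta^*,\lambda\rangle=\langle\beta,\lambda^*\rangle=-\langle w_0\beta,\lambda\rangle$ is the one that makes both directions of the equivalence go through, exactly as you use in your argument. You should say explicitly that you are correcting (or at least departing from) the printed statement rather than appearing to derive something the statement does not say; as written, a reader checking your work against the proposition will see a mismatch and not know whether it is yours or the paper's. With that caveat spelled out, the rest of your argument --- reduction to $t^\mu$ via $G^\vee(\O)$-invariance of the standard lattices, the observation that the right inclusions for all $\beta$ are the left inclusions for all $\beta^*$, the duality of the dominant-coweight cone with the positive-root cone, and the use of (ii) to upgrade from a real-cone membership to a nonnegative integral combination of simple roots --- is correct.
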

(Note that the second condition here is vacuous when $ G^\vee$ is simply connected.  Also, note that a similar characterization works for any complete curve.  On the other hand, it is not clear to the author what to do with the second condition when considering open curves.)

\subsection{The global convolution Grassmannian}
Now fix $ \ul = (\lam_1, \dots, \lam_n) $ an $n$-tuple of dominant minuscule weights of $ G $.  We now consider the global convolution Grassmannian, which is the variety 
\begin{equation*}
\begin{aligned}
\widetilde{\Gr}^\ul_{X^n} := \{ &((x_1, \dots, x_n), (P_1, \dots, P_n), (\phi_1, \dots, \phi_n)) : x_i \in X, \\ & P_i \text{ is a principal } G^\vee \text{ bundle on } X, \\
& \phi_i : P_{i-1}|_{X \smallsetminus x_i} \rightarrow P_i|_{X \smallsetminus x_i} \text{ is an isomorphism of Hecke type $ \lam_i $ at $ x_i$.} \}
\end{aligned}
\end{equation*}
We have the projection  $ \tilde{p} : \widetilde{\Gr}^\ul_{X^n} \rightarrow X^n $.  For any subset $ A \subset X^n $, let $ \widetilde{\Gr}^\ul_A = \tilde{p}^{-1}(A) $ be the preimage of $ A $ under this map. 

Consider the following two loci in $ X^n$: the locus of regular points $\Xnreg := \{ (x_1, \dots, x_n) : x_i \ne x_j \} $ and the small diagonal $ X = \{ (x,\dots, x) \} \subset X^n  $.  The fibres of $ \tilde{p} $ over regular points in $\Xnreg := \{ (x_1, \dots, x_n) : x_i \ne x_j \} $ are isomorphic to $ \Gr^{\lam_1} \times \dots \times \Gr^{\lam_n} $.  The fibres over points on the small diagonal are the local convolution products $ \Gr^{\lam_1} \ctimes \dots \ctimes \Gr^{\lam_n} $. 

A proof of the following result can be found in the proof of Lemma 6.1 of \cite{MVi}.
\begin{Proposition} \label{th:pushconstant1}
The pushforwards $ R^k\tilde{p}_* \C_{\widetilde{\Gr}^\ul_{X^n}} $ are constant sheaves.
\end{Proposition}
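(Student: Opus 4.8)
The plan is to argue by induction on $n$, exploiting the factorization of $\tilde{p}$ as a tower of honest fibre bundles, each with a projective homogeneous fibre, together with the fact that such bundles have constant cohomology sheaves. The base case ($n = 0$, or $n = 1$) is clear. For the inductive step, write $\tilde{p}'$ for the projection $\widetilde{\Gr}^{(\lam_1,\dots,\lam_{n-1})}_{X^{n-1}} \to X^{n-1}$. Forgetting the last bundle and the last modification gives a map
\begin{equation*}
q : \widetilde{\Gr}^\ul_{X^n} \longrightarrow \widetilde{\Gr}^{(\lam_1,\dots,\lam_{n-1})}_{X^{n-1}} \times X ,
\end{equation*}
where the extra factor $X$ records $x_n$, and $\tilde{p} = (\tilde{p}' \times \mathrm{id}_X) \circ q$ under the identification $X^n = X^{n-1} \times X$. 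The fibre of $q$ over a point $((x_1,\dots,x_{n-1}),(P_1,\dots,P_{n-1}),(\phi_1,\dots,\phi_{n-1}),x_n)$ is the space of Hecke modifications of $P_{n-1}$ of type $\lam_n$ at $x_n$; choosing a trivialization of $P_{n-1}$ near $x_n$ and a local coordinate identifies it with $\overline{\Gr^{\lam_n}} = \Gr^{\lam_n}$, the equality because $\lam_n$ is minuscule. So $q$ is a fibre bundle with fibre the projective homogeneous space $\Gr^{\lam_n} = G^\vee/P_{\lam_n}$; in particular $q$, and hence $\tilde{p}$, is proper.

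First I would check that $R^k q_* \C$ is the constant sheaf with stalk $H^k(\Gr^{\lam_n})$. The structure group of $q$ is contained in $\mathrm{Aut}(\O) \ltimes G^\vee(\O)$ — coordinate changes near $x_n$ together with re-trivializations of $P_{n-1}$ on the disc around $x_n$ — acting on $\Gr^{\lam_n}$; this group is (pro-)connected, since $G^\vee$ is, so the monodromy of $R^k q_* \C$ factors through its group of components and is trivial. Feeding this into the Leray spectral sequence for $\tilde{p} = (\tilde{p}' \times \mathrm{id}_X) \circ q$, and using the inductive hypothesis that $R^j \tilde{p}'_* \C$ is constant — whence $R^j(\tilde{p}' \times \mathrm{id}_X)_* \underline{V}$ is constant on $X^n$ for every fixed vector space $V$ — one finds that the $E_2$-page $E_2^{j,k} = R^j(\tilde{p}' \times \mathrm{id}_X)_* R^k q_* \C$ consists of constant sheaves.

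The main obstacle is then the passage from the $E_2$-page to the abutment. The differentials are morphisms of constant sheaves on the connected space $X^n$, so they have constant kernels and cokernels and the $E_\infty$-terms are again constant; but a priori $R^m \tilde{p}_* \C$ only inherits a finite filtration with constant graded pieces, and an extension of constant local systems may fail to be constant. To close this gap I would either (a) show that the spectral sequence degenerates at $E_2$ — for instance by a weight argument, all spaces being smooth and $\tilde{p}$ proper — so that no extension problem arises; or (b) note directly that $R^m \tilde{p}_* \C$ is a local system whose monodromy factors through the group of components of the structure group of the iterated fibration $\tilde{p}$, which is again (pro-)connected. (When $X = \p$, the case of ultimate interest, the issue is moot, since $\p^n$ is simply connected.) This is essentially the argument in the proof of Lemma 6.1 of \cite{MVi}, which may be cited or transcribed.
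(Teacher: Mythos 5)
The paper offers no proof of its own for this Proposition---it simply cites Lemma~6.1 of \cite{MVi}---so the comparison is necessarily with that reference rather than with text in the paper itself; your inductive argument is in the same spirit. The key structural observation you make is correct and is the right one: while $\tilde p$ itself is \emph{not} a bundle of varieties (its fibre type changes along the diagonals, from products over $\Xnreg$ to twisted products over the small diagonal), the map $q$ that forgets the last modification \emph{is} an honest, analytically locally trivial fibre bundle with fibre the smooth projective variety $\Gr^{\lam_n}$ and pro-connected structure group $\mathrm{Aut}(\O)\ltimes G^\vee(\O)$, so that $R^kq_*\C$ is a constant sheaf with stalk $H^k(\Gr^{\lam_n})$. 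Factoring $\tilde p = (\tilde p'\times\mathrm{id}_X)\circ q$ and inducting on $n$ is a clean way to organize the proof.

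You are also right to flag the extension problem as the remaining gap, but I would tighten your option (a): degeneration of the Leray spectral sequence at $E_2$ by itself does \emph{not} dispose of the extension problem. Degeneration kills the differentials, but the abutment $R^m\tilde p_*\C$ still only inherits a filtration with constant graded pieces, i.e.\ unipotent monodromy, which need not be trivial. What closes the gap is the stronger form of Deligne's theorem for the smooth projective morphism $q$: the non-canonical decomposition $Rq_*\C \cong \bigoplus_k R^kq_*\C[-k]$ in the derived category. Pushing forward along $\tilde p'\times\mathrm{id}_X$ then gives
\begin{equation*}
R^m\tilde p_*\C \;\cong\; \bigoplus_{j+k=m} R^j(\tilde p'\times\mathrm{id}_X)_*\bigl(R^kq_*\C\bigr)
\;\cong\; \bigoplus_{j+k=m} \bigl(R^j\tilde p'_*\C\bigr)\boxtimes \C_X \otimes H^k(\Gr^{\lam_n}),
\end{equation*}
an honest direct sum of constant sheaves by the inductive hypothesis, with no extension to resolve. (Equivalently: unipotent by degeneration, semisimple by the decomposition theorem, hence trivial.) Your option (c) should not be leaned on: the Proposition is stated for an arbitrary smooth curve $X$, and it is invoked exactly to kill the monodromy of $\pi_1(\Xnreg)$, which is nontrivial content precisely when $X^n$ is not simply connected. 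Your option (b) is plausible in spirit but is vaguer than you acknowledge: an iterated fibre bundle does not come equipped with an obvious connected ``total'' structure group, so making that route precise would take real work. Option (a), upgraded to the full Deligne decomposition rather than mere degeneration, is the path of least resistance and completes a correct proof.
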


\subsection{The global singular Grassmannian}
Now, we can introduce the singular version of the above space.  We define the global singular Grassmannian to be
\begin{equation*}
\begin{aligned}
\Gr^\ul_{X^n} := \Big\{ \big( &(x_1, \dots, x_n), P, \phi \big) : 
x_i \in X, P \text{ is a principal } G^\vee \text{ bundle on } X, \\
& \phi : P_0|_{X \smallsetminus \{x_1, \dots, x_n \}} \rightarrow P|_{X \smallsetminus \{x_1, \dots, x_n \}} \text{ is an isomorphism }\\
& \text{ and for each $x \in X $, $ \phi $ has Hecke type $ \le \sum_{i \, : \, x_i = x} \lam_i $ at $ x $} \Big\}
\end{aligned}
\end{equation*}

Again this is a family over $X^n$.  
\begin{Proposition}  \label{th:fibres}
The fibres of this family are described as follows.
\begin{enumerate}
\item The fibres over regular points are still $ \Gr^{\lam_1} \times \dots \times \Gr^{\lam_n} $.  In fact $ \widetilde{\Gr}^\ul_{\Xnreg} \cong \Gr^\ul_{\Xnreg} $.
\item The fibres over points on the small diagonal are the (usually singular) varieties $ \overline{\Gr^{\lam_1 + \dots + \lam_n}} $.
\end{enumerate}
\end{Proposition}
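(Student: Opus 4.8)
The plan is to reduce everything to the local picture at the support of the points: a point of $\Gr^\ul_{X^n}$ lying over a fixed $(x_1, \dots, x_n) \in X^n$ is a $G^\vee$-bundle $P$ on $X$ together with an isomorphism $\phi$ to the trivial bundle $P_0$ over $X \smallsetminus \{x_1, \dots, x_n\}$, and such a pair is the same thing as its germs near $x_1, \dots, x_n$ glued back together along $\phi$. Because $\phi$ trivializes $P$ away from the support, this gluing is unobstructed, so in each case the fibre is a product of local models, one for each distinct point of the support.

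For (i), fix a regular point $(x_1, \dots, x_n)$ and choose pairwise disjoint coordinate neighbourhoods $U_j \ni x_j$ with $x_i \notin U_j$ for $i \neq j$. Restricting $(P,\phi)$ to $U_j$ gives a point $L_j = (P|_{U_j}, \phi|_{U_j \smallsetminus x_j})$ of $\Gr_{U_j, x_j}$, and since $x_j$ is the support of only the single index $j$, the defining condition at $x_j$ reads $L_j \in \overline{\Gr^{\lambda_j}_{U_j, x_j}} = \Gr^{\lambda_j}_{U_j, x_j}$, the equality holding because $\lambda_j$ is minuscule. Conversely, gluing $P_0$ on $X \smallsetminus \{x_1, \dots, x_n\}$ to the bundles $P|_{U_j}$ along the isomorphisms $\phi|_{U_j \smallsetminus x_j}$ reconstructs $(P,\phi)$ from $(L_1, \dots, L_n)$, so the fibre is $\prod_j \Gr^{\lambda_j}_{U_j,x_j} \cong \Gr^{\lambda_1} \times \dots \times \Gr^{\lambda_n}$. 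For the sharper assertion $\widetilde{\Gr}^\ul_{\Xnreg} \cong \Gr^\ul_{\Xnreg}$, I would exhibit the morphism over $\Xnreg$ sending $\big((x_1, \dots, x_n), (P_1, \dots, P_n), (\phi_1, \dots, \phi_n)\big)$ to $\big((x_1, \dots, x_n), P_n, \phi_n \circ \cdots \circ \phi_1\big)$; near $x_j$ every $\phi_i$ with $i \neq j$ is an isomorphism of bundles, so the composite has the same Hecke type at $x_j$ as $\phi_j$, namely $\lambda_j$, and the target condition is met. The inverse builds the chain one modification at a time — $P_i$ is the Hecke modification of $P_{i-1}$ prescribed by the germ of $\phi$ at $x_i$ — which is legitimate precisely because the $x_i$ are distinct, so the germ of $\phi$ near $x_i$ is untouched by the modifications performed at the other points; the two constructions are visibly mutually inverse.

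For (ii), fix a point $(x, \dots, x)$ on the small diagonal and a coordinate neighbourhood $U \ni x$. Now the whole support is the single point $x$, carrying the total weight $\lambda_1 + \dots + \lambda_n$, so the only condition on $(P,\phi)$ is that the associated point $L = (P|_U, \phi|_{U \smallsetminus x})$ of $\Gr_{U,x}$ lie in $\overline{\Gr^{\lambda_1 + \dots + \lambda_n}_{U,x}}$. As before $\phi$ trivializes $P$ on $X \smallsetminus x$, so $(P,\phi)$ is recovered from $L$ by gluing, and choosing the coordinate identifies the fibre with $\overline{\Gr^{\lambda_1 + \dots + \lambda_n}_{U,x}} \cong \overline{\Gr^{\lambda_1 + \dots + \lambda_n}}$. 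This variety is singular unless $\lambda_1 + \dots + \lambda_n$ is itself minuscule, which is what the parenthetical ``usually singular'' refers to.

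The one point that is more than bookkeeping is checking that the isomorphism $\widetilde{\Gr}^\ul_{\Xnreg} \cong \Gr^\ul_{\Xnreg}$ is an isomorphism of families and not merely a fibrewise bijection — that is, that ``decomposing'' a single modification supported at $n$ moving distinct points into an ordered chain of modifications of types $\lambda_1, \dots, \lambda_n$ varies algebraically with the $x_i$. This is an instance of the factorization property of the Beilinson--Drinfeld Grassmannian (the same property exploited in section~3), applied over the open subset $\Xnreg$, and I expect it to be the main obstacle, though not a serious one; the remaining assertions follow directly from the definition of Hecke type and from the description of the closures $\overline{\Gr^\lambda}$ recalled above.
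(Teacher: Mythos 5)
Your argument is correct and matches what the paper has in mind: the paper in fact states Proposition~\ref{th:fibres} without proof, and its only indication of a justification is the remark after the factorization theorem in Section~3 that part (i) ``implicitly uses'' a special case of that theorem --- which is precisely the ingredient you invoke to upgrade your fibrewise identification to an isomorphism of families over $\Xnreg$. Your local gluing analysis (restrict $(P,\phi)$ to disjoint coordinate discs around the $x_j$, use minusculity to replace $\overline{\Gr^{\lambda_j}}$ by $\Gr^{\lambda_j}$, and reassemble by gluing along $\phi$), the explicit map $(\phi_1,\dots,\phi_n) \mapsto \phi_n\circ\cdots\circ\phi_1$ with its inverse built one modification at a time using distinctness of the $x_i$, and the single-disc reading of the Hecke-type bound on the small diagonal are all exactly the standard reasoning behind this statement.
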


There is an obvious map $ \widetilde{\Gr}^\ul_{X^n} \rightarrow \Gr^\ul_{X^n} $.  This map is 1-1 over the locus in $ \Gr^\ul_{X^n} $ which sits over $ \Xnreg $ and coincides with the map $ m_\ul $ on a fibre over a point in the small diagonal in $ X^n $.

The biggest subgroup of $ \Sigma_n $ which acts on $ {\Gr}^\ul_{X^n} $ is denoted $ \Sigma_\ul $.  More precisely, $ \Sigma_\ul $ is the stabilizer of $ \ul $ inside $ \Sigma_n $ (so if all $ \lam_i $ are equal, then $ \Sigma_\ul = \Sigma_n $ and if all $ \lam_i $ are different, then $ \Sigma_\ul = \{1\} $).  

The quotient of $ X^n $ by $ \Sigma_\ul $ is denoted $ X^\ul $.  Since all $ \lambda_i $ are minuscule, distinct $ \lambda_i $ are linearly independent.  Hence we can identify $ X^\ul $ with the space of ``$\Lambda_+$-coloured divisors'' of total weight $ \sum \lam_i $, ie functions $ D : X \rightarrow \Lambda_+ $ such that $ \sum_{x \in \p} D(x) = \sum_i \lam_i $.  

The quotient of $ \Gr^\ul_{X^n} $ by $ \Sigma_\ul $ is denoted $ \Gr^\ul_{X^\ul} $ and we can describe its points as follows.
\begin{equation*}
\begin{aligned}
\Gr^\ul_{X^\ul} := \Big\{ &(D, P, \phi) : 
D \in X^\ul, P \text{ is a principal } G^\vee \text{ bundle on } X, \\
& \phi : P_0|_{\p \smallsetminus \supp(D)} \rightarrow P|_{\p \smallsetminus \supp(D)} \text{ is an isomorphism }\\
& \text{ and for each $ x \in X $, $ \phi $ has Hecke type $ \le D(x)$ at $x $} \Big\}
\end{aligned}
\end{equation*}

Let $ \co_\ul := \Xnreg / \Sigma_\ul $.  We have the smooth family $ \Gr^\ul_{\co_\ul} \rightarrow \co_\ul $ and the following Cartesian square.  
\begin{equation} \label{eq:square}
\begin{CD}
\widetilde{\Gr}^\ul_{\Xnreg} @>>> \Gr^\ul_{\co_\ul} \\
@V\tilde{p}VV @VVpV \\
\Xnreg @>>> \co_\ul 
\end{CD}
\end{equation}

We can consider the monodromy action of $ \pi_1(\co_\ul) $ on the cohomology of fibres in the family $p$.  There is a short exact sequence of groups 
\begin{equation*}
1 \rightarrow \pi_1(\Xnreg) \rightarrow \pi_1(\co_\ul) \rightarrow \Sigma_\ul \rightarrow 1.
\end{equation*}
Since the pushforwards $ R^k \tilde{p}_* \C_{\widetilde{\Gr}^\ul_{X^n}}$ are constant by Proposition \ref{th:pushconstant1}, the monodromy action of $ \pi_1(\Xnreg) $ on the cohomology of the fibres of $ \tilde{p} $ is trivial.  So because of the Cartesian square (\ref{eq:square}), this means that the monodromy action of $ \pi_1(\co_\ul) $ factors through $ \Sigma_\ul $.  Moreover the following fact is true.

\begin{Proposition} \label{th:monod1}
Assume that all $ \lambda_i $ are minuscule and let $ x \in \co_\ul $.  There is an isomorphism $ H^*(\Gr^\ul_x) \cong V_{\lam_1} \otimes \cdots \otimes V_{\lam_n} $, compatible with the actions of $ \Sigma_\ul $ on both sides (by monodromy and by permuting tensor factors).
\end{Proposition}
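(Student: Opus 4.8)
The plan is to deduce this from the geometric Satake theorem (Theorem \ref{th:geomsat}) together with the constancy of the pushforwards (Proposition \ref{th:pushconstant1}), reading the $ \Sigma_\ul $-equivariance off the way the symmetric monoidal structure on the spherical Hecke category is built in \cite{MVi}. To construct the isomorphism of vector spaces, I would first fix $ x \in \co_\ul $ and choose a lift $ \tilde x = (x_1, \dots, x_n) \in \Xnreg $, so the $ x_i $ are distinct. Since the prescribed Hecke modifications then occur at disjoint points, Proposition \ref{th:fibres}(i) identifies $ \Gr^\ul_x $ with the fibre $ \widetilde{\Gr}^\ul_{\tilde x} $, which is the product $ \Gr^{\lam_1}_{x_1} \times \cdots \times \Gr^{\lam_n}_{x_n} $ of copies of $ \Gr^{\lam_1}, \dots, \Gr^{\lam_n} $. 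The K\"unneth formula and Theorem \ref{th:geomsat}(i) then give
\[
H^*(\Gr^\ul_x) \;\cong\; H^*(\Gr^{\lam_1}) \otimes \cdots \otimes H^*(\Gr^{\lam_n}) \;\cong\; V_{\lam_1} \otimes \cdots \otimes V_{\lam_n}.
\]
One may equivalently invoke Proposition \ref{th:pushconstant1}, which identifies $ H^*(\widetilde{\Gr}^\ul_{\tilde x}) $ canonically with $ H^*(\widetilde{\Gr}^\ul) \cong V_{\lam_1} \otimes \cdots \otimes V_{\lam_n} $ from Theorem \ref{th:geomsat}(ii). Neither the choice of lift nor the choice of local coordinates at the $ x_i $ affects the resulting vector space.

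Next I would pin down the $ \Sigma_\ul $-action. As noted just before the statement, the Cartesian square (\ref{eq:square}) and Proposition \ref{th:pushconstant1} already force the monodromy of $ \pi_1(\co_\ul) $ on $ H^*(\Gr^\ul_x) $ to factor through $ \Sigma_\ul $. Unwinding this, an element $ \sigma \in \Sigma_\ul $ acts by passing to the alternative lift $ \sigma \cdot \tilde x $ of $ x $: since the pushforwards over $ \Xnreg $ are constant, parallel transport in $ \widetilde{\Gr}^\ul_{\Xnreg} \rightarrow \Xnreg $ from $ \tilde x $ to $ \sigma \cdot \tilde x $ is canonical, and composing it with the relabelling isomorphism $ \Gr^\ul_{\tilde x} \cong \Gr^\ul_{\sigma \cdot \tilde x} $ recovers the action of $ \sigma $ on the fibre. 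On the product description this relabelling simply permutes the factors $ \Gr^{\lam_i}_{x_i} $, and moving the marked points past one another inside the Beilinson-Drinfeld Grassmannian is precisely the geometric input used to define the commutativity constraint of the spherical Hecke category in \cite[Section 6]{MVi}. So on cohomology $ \sigma $ acts by the corresponding permutation of the tensor factors of $ V_{\lam_1} \otimes \cdots \otimes V_{\lam_n} $, which is the action on the right-hand side.

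The one step I expect to require care is the sign in the commutativity constraint: in general, swapping two marked points induces the permutation of tensor factors only up to the Koszul sign determined by the cohomological degrees, and \cite{MVi} correct for this with a $ \Z/2 $-grading built from $ 2\rho^\vee $. Here, however, each $ \Gr^{\lam_i} $ is a smooth projective partial flag variety for $ G^\vee $, so $ H^*(\Gr^{\lam_i}) $ is concentrated in even degrees (its Schubert cells are complex affine spaces) and every such sign is trivial; the geometric swap then realises the naive permutation directly, with no correction. This is the only place where minusculeness of the $ \lam_i $ enters beyond Theorem \ref{th:geomsat}, and the remaining compatibilities are formal consequences of that theorem and its compatibility clause.
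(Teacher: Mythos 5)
Your argument follows the paper's own route: the isomorphism is obtained from the product description of the fibre over a regular point (Proposition~\ref{th:fibres}(i)), the K\"unneth theorem, geometric Satake (Theorem~\ref{th:geomsat}), and constancy of the pushforwards (Proposition~\ref{th:pushconstant1}), and the $\Sigma_\ul$-equivariance is deduced from the construction of the commutativity constraint of the spherical Hecke category in \cite{MVi} (the paper points to Section~5, where you cite Section~6). Your extra remark about the Koszul sign---that for minuscule $\lambda_i$ each $H^*(\Gr^{\lambda_i})$ is concentrated in even degree, so the geometric swap induces the unsigned permutation of tensor factors---is a correct and useful unpacking of why the $2\rho^\vee$-modified commutativity constraint of \cite{MVi} reduces to the naive permutation in this setting; the paper leaves this implicit in the citation.
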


\begin{proof}
The isomorphism comes from using Proposition \ref{th:monod1}, Proposition \ref{th:fibres}, and Theorem \ref{th:geomsat}.  The compatibility with the actions of $ \Sigma_\ul$ is an immediate consequence of the construction of the commutativity constraint for the spherical Hecke category (see \cite[Section 5]{MVi}). 
\end{proof}

\subsection{The open global convolution/singular Grassmannian}
So far we have dealt with arbitrary curves $ X $.  Now we specialize to $ X = \p$.

Let $ \ul $ be such that $ \lam_1 + \dots + \lam_n $ is in the root lattice of $ G $.  This condition is necessary for $ (V_{\lam_1} \otimes \cdots \otimes V_{\lam_n})^G $ to be non-empty.  

We introduce the open subvariety $ \phantom{}_0\widetilde{\Gr}^\ul_{\p^n} $ of $ \widetilde{\Gr}^\ul_{\p^n}$, which consists of the locus where we impose the constraint that $ P_n $ is isomorphic to the trivial $G^\vee $ bundle on $\p$.  This subvariety would empty if we did not impose the above  condition that $ \lam_1 + \dots + \lam_n $ is in the root lattice.  We call this the open global convolution Grassmannian. 

The significance of this open subvariety is given by the following result.
\begin{Proposition} \label{th:midhom}
\begin{enumerate}
\item For any $ y \in \p$, $ \phantom{}_0\widetilde{\Gr}^\ul_{(y,\dots, y)}$ retracts onto the half-dimensional locus $ m_\ul^{-1}(L_0) $. 
\item There is an isomorphism $ H^\mathrm{mid}(\phantom{}_0\widetilde{\Gr}^\ul_{(y,\dots, y)}) \cong (V_{\lam_1} \otimes \cdots \otimes V_{\lam_n})^G $. 
\end{enumerate}
\end{Proposition}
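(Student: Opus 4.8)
Set $\nu := \lambda_1+\dots+\lambda_n$, so that the convolution morphism $m_\ul$ has image $\overline{\Gr^\nu}$; recall also that $\widetilde{\Gr}^\ul$ is the fibre of $\widetilde{\Gr}^\ul_{\p^n}$ over $(y,\dots,y)$ and that it is a projective variety, being an iterated twisted product of the projective varieties $\overline{\Gr^{\lambda_i}}$. The plan is to realise $\phantom{}_0\widetilde{\Gr}^\ul_{(y,\dots,y)}$ as the $m_\ul$-preimage of a subvariety of $\overline{\Gr^\nu}$ contracted to $L_0$ by a $\C^*$-action, and then to run the standard ``contracting $\C^*$'' retraction. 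First I would pass to the local picture: under $\Gr \cong \Gr_{\p,y}$ a point $L$ records a $G^\vee$-bundle $P_L$ on $\p$ with a trivialization over $\p \smallsetminus y$, and for $(L_1,\dots,L_n) \in \widetilde{\Gr}^\ul$ the terminal bundle $P_n$ is $P_{L_n}$ with $L_n = m_\ul(L_1,\dots,L_n)$. Hence, putting
\begin{equation*}
U := \{\, L \in \overline{\Gr^\nu} : P_L \text{ is trivial} \,\},
\end{equation*}
one obtains $\phantom{}_0\widetilde{\Gr}^\ul_{(y,\dots,y)} = m_\ul^{-1}(U)$. I would then identify $U$ as the orbit $G^\vee(\p \smallsetminus y)\cdot L_0$ intersected with $\overline{\Gr^\nu}$: the bundle $P_{g\cdot L_0}$ is trivial for any $g \in G^\vee(\p\smallsetminus y)$, and conversely a global trivialization of a trivial $P_L$ differs from the given one by such a $g$. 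In particular $U$ is contractible.

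Next I would bring in the $\C^*$: loop rotation at $y$, i.e.\ the torus of automorphisms of $\p$ fixing $y$ and one other point. It acts on $\Gr \cong \Gr_{\p,y}$ preserving $\overline{\Gr^\nu}$, fixing $L_0$, and preserving triviality of $P_L$, hence preserving $U$; it acts on $\widetilde{\Gr}^\ul$ through $\operatorname{Aut}(\O)$; and $m_\ul$ is equivariant. The key point is that, with the appropriate orientation, this action contracts $U$ onto $L_0$: writing a point of $U$ as $g\cdot L_0$ with $g$ a regular map $\p \smallsetminus y \cong \mathbb{A}^1 \to G^\vee$ in a coordinate $w$, the rescaling $w \mapsto c\,w$ replaces $g$ by $g(c\,w)$, which tends to the constant loop $g(0) \in G^\vee(\O) = \operatorname{Stab}(L_0)$ as $c \to 0$, so $c\cdot(g\cdot L_0) \to L_0$. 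Since $m_\ul\colon m_\ul^{-1}(U) \to U$ is then a proper, $\C^*$-equivariant morphism whose base is contracted to the single point $L_0$, the $\C^*$-flow should define a deformation retraction of $\phantom{}_0\widetilde{\Gr}^\ul_{(y,\dots,y)} = m_\ul^{-1}(U)$ onto the central fibre $m_\ul^{-1}(L_0)$: by properness of $m_\ul$ each orbit $\{c\cdot\xi\}$ has a unique limit in $m_\ul^{-1}(L_0)$ as $c \to 0$, and these limits assemble into a retraction, just as for the central fibres in \cite{SS} and \cite{M} (and as for Springer resolutions). I expect this to be the main obstacle: making the retraction precise, i.e.\ checking continuity of the limit map — but this is a standard consequence of properness together with the contracting action.

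Finally, to complete (i) I would note that $m_\ul^{-1}(L_0)$ is half-dimensional because the convolution morphism is semismall (\cite{MVi}) and the stratum $\{L_0\} = \Gr^0$ is relevant — since $(V_{\lambda_1}\otimes\cdots\otimes V_{\lambda_n})^G \neq 0$ — so that $\dim_\C m_\ul^{-1}(L_0) = \tfrac12 \dim_\C \widetilde{\Gr}^\ul$; as $\phantom{}_0\widetilde{\Gr}^\ul_{(y,\dots,y)}$ is open in $\widetilde{\Gr}^\ul$, of complex dimension $d$ say, this means $m_\ul^{-1}(L_0)$ has real dimension $d$, which is its middle degree. Then (ii) follows at once:
\begin{equation*}
H^{\mathrm{mid}}\big(\phantom{}_0\widetilde{\Gr}^\ul_{(y,\dots,y)}\big) \;=\; H^{d}\big(m_\ul^{-1}(L_0)\big) \;=\; H^{\mathrm{top}}\big(m_\ul^{-1}(L_0)\big) \;\cong\; (V_{\lambda_1}\otimes\cdots\otimes V_{\lambda_n})^G,
\end{equation*}
the first equality being the retraction of part (i) and the last isomorphism being Theorem \ref{th:geomsat}(iii).
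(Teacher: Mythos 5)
Your proof is correct and follows essentially the same route as the paper's: identify $\phantom{}_0\widetilde{\Gr}^\ul_{(y,\dots,y)}$ with $m_\ul^{-1}(\phantom{}_0\Gr)$, use the contracting loop-rotation $\C^\times$-action to retract onto $m_\ul^{-1}(L_0)$, and invoke semismallness of $m_\ul$ plus Theorem \ref{th:geomsat}(iii) for the dimension count and the identification of $H^{\mathrm{top}}$. You spell out slightly more of the retraction argument (unique limits via properness of $m_\ul$), where the paper simply cites \cite{MVi} and asserts that the $\C^\times$-action extends to the convolution product.
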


\begin{proof}
Assume that $ y = 0 \in \p $.  We can identify $ \widetilde{\Gr}^\ul_{(y, \dots, y)} $ with the convolution product $ \Gr^{\lam_1} \ctimes \cdots \ctimes \Gr^{\lam_n} $.  A point $ L $ in $ \Gr $ corresponds to a trivial $ G^\vee $ bundle on $ \p$ if and only if $ L $ is in the $G[z^{-1}] $ orbit of $ L_0 $, which we denote $ \phantom{}_0\Gr$.  So $ \grt_{(y, \dots, y)} $ is identified with $ m_\ul^{-1}(\phantom{}_0\Gr) $.  

The open subset $ \phantom{}_0\Gr $ of the affine Grassmannian retracts onto $ L_0 $ via the loop rotation action of the group $ \C^\times $ (see for example \cite[Section 2]{MVi}).  This action extends to the convolution product and retracts $ m_\ul^{-1}(\phantom{}_0\Gr)  $ onto $ m_\ul^{-1}(L_0) $ as desired. 

We have $ \dim m_\ul^{-1}(L_0) = \frac{1}{2} \dim m_\ul^{-1}(\phantom{}_0 \Gr) $ because the map $ m_\ul $ is semismall (see the proof of Lemma 4.4 in \cite{MVi}).

Hence we have
\begin{equation*}
H^{\mathrm{mid}}(\grt_{(y,\dots, y)}) \cong H^{\mathrm{top}}(m_\ul^{-1}(L_0)) \cong (V_{\lam_1} \otimes \cdots \otimes V_{\lam_n})^G 
\end{equation*}
where the last isomorphism follows from Theorem \ref{th:geomsat}.(iii).
\end{proof}

Analogously, we also define $ \gr_{\p^n} $ and $ \gr_{\p^\ul} $, the open global singular Grassmannian.  So 
\begin{equation*} \gr_{\p^\ul} = \{ (D, P, \phi) \in \Gr^\ul_{\p^\ul} : P \text{ is isomorphic to the trivial bundle } \}  
\end{equation*}
We have a Cartesian square identical to (\ref{eq:square}):
 \begin{equation} \label{eq:square2}
\begin{CD}
\grt_{\Pnreg} @>>> \gr_{\co_\ul} \\
@V\tilde{p}VV @VVpV \\
\Pnreg @>>> \co_\ul 
\end{CD}
\end{equation}

\begin{Proposition} \label{th:constant2}
The pushforwards $ R^k\tilde{p}_* \C_{\grt_{\p^n}} $ are constant sheaves on $ \p^n$.
\end{Proposition}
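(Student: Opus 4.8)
The plan is to deduce this from the analogous statement for the non-open family (Proposition \ref{th:pushconstant1}) together with a careful analysis of the topological-type constraint that cuts out the open locus. First I would recall that $\phantom{}_0\widetilde{\Gr}^\ul_{\p^n}$ is the locus inside $\widetilde{\Gr}^\ul_{\p^n}$ where $P_n$ is isomorphic to the trivial $G^\vee$-bundle on $\p$. The key observation is that triviality of $P_n$ is an \emph{open} condition on the base-and-total-space level, but more importantly it is detected by the topological type: by Proposition \ref{th:Hecketype}(ii), the topological type of $P_n$ equals $[\lam_1 + \dots + \lam_n] \in \pi_1(G^\vee) = \Lambda/Q$ and is therefore locally constant on $\widetilde{\Gr}^\ul_{\p^n}$. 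Combined with semicontinuity, on each connected component of $\widetilde{\Gr}^\ul_{\p^n}$ the bundle $P_n$ has constant topological type, and within such a component the locus where $P_n$ is actually trivial (the ``most generic'' isomorphism class in its topological type) is Zariski open.

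The heart of the argument is then to show that this open locus, fibrewise over $\p^n$, captures the full cohomology up to a shift, or at least that its pushforwards are still constant. Here I would invoke the $\C^\times$ loop-rotation retraction exactly as in the proof of Proposition \ref{th:midhom}: over a point on the small diagonal $(y, \dots, y)$, the fibre $\phantom{}_0\widetilde{\Gr}^\ul_{(y,\dots,y)}$ retracts onto $m_\ul^{-1}(L_0)$, and over a point of $\Pnreg$ the fibre is a product $\phantom{}_0\Gr^{\lam_1} \times \dots$ of the corresponding open affine-Grassmannian strata, each of which retracts onto a point. More structurally, I expect the cleanest route is to observe that the inclusion $\phantom{}_0\widetilde{\Gr}^\ul_{\p^n} \hookrightarrow \widetilde{\Gr}^\ul_{\p^n}$ is the open complement of the other topological-type components (those sitting over the ``wrong'' class in $\Lambda/Q$), so that $\C_{\phantom{}_0\widetilde{\Gr}^\ul_{\p^n}}$ is simply a direct summand of $\C_{\widetilde{\Gr}^\ul_{\p^n}}$ restricted along components; then $R^k \tilde{p}_* \C_{\phantom{}_0\widetilde{\Gr}^\ul_{\p^n}}$ is a direct summand of $R^k \tilde{p}_* \C_{\widetilde{\Gr}^\ul_{\p^n}}$, which is constant by Proposition \ref{th:pushconstant1}, and a direct summand of a constant sheaf on a connected base is constant.

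The main obstacle I anticipate is the subtlety flagged in the parenthetical remark after Proposition \ref{th:Hecketype}: the triviality of $P_n$ is \emph{not} simply a union of connected components of $\widetilde{\Gr}^\ul_{\p^n}$, because within a fixed topological type there are many isomorphism classes of $G^\vee$-bundles, only one of which is trivial. So the decomposition ``$\phantom{}_0\widetilde{\Gr}^\ul_{\p^n}$ is a union of components'' is false in general, and one must instead argue that the \emph{complement} $\widetilde{\Gr}^\ul_{\p^n} \smallsetminus \phantom{}_0\widetilde{\Gr}^\ul_{\p^n}$ (where $P_n$ is non-trivial but of the correct topological type) also has constant pushforwards, or else argue directly. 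The safest implementation: combine the base-change formula for the Cartesian square (\ref{eq:square2}) with the fact, from the $\C^\times$-action, that $\tilde{p}$ restricted to $\phantom{}_0\widetilde{\Gr}^\ul_{\p^n}$ is equivariant for a torus acting on $\p^n$ with attracting fixed points; contraction-principle arguments then force the pushforwards to be constant along the orbits, and since any two points of $\p^n$ are connected through such orbit closures (using that $\p^n$ itself is simply connected and that $\widetilde{\Gr}^\ul_{\p^n}$ maps to it with the relevant local-triviality), constancy follows globally. I would present the clean direct-summand argument as the main line and relegate the $\C^\times$-contraction as the fallback sanity check, but the direct-summand step needs the honest justification that $R^k\tilde p_*$ of the constant sheaf on the complement is itself locally constant, which is where most of the real work sits.
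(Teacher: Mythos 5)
Your proposal circles the right difficulty but does not resolve it, and you in fact flag the unresolved step yourself. The direct-summand idea fails for the reason you identify: the locus where $P_n$ is trivial is not a union of connected components of $\widetilde{\Gr}^\ul_{\p^n}$ (topological type, an element of $\pi_1(G^\vee) = \Lambda/Q$, is locally constant, but isomorphism type is only semicontinuous), so $\C_{\grt_{\p^n}}$ is genuinely not a summand of $\C_{\widetilde{\Gr}^\ul_{\p^n}}$ and Proposition \ref{th:pushconstant1} cannot be invoked by restriction. Your fallback via $\C^\times$-contraction and ``connecting points through orbit closures'' is too vague to be a proof: it is not clear what torus acts on $\p^n$ in a way compatible with $\tilde{p}$, and contraction principles give information about cohomology of a single fibre, not about local constancy of the whole pushforward over a $2n$-real-dimensional base.

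The idea you are missing is to stratify not by the topological type of $P_n$ alone but by the isomorphism type (Grothendieck splitting type, i.e.\ a dominant weight $\mu_i$) of \emph{every} bundle $P_1, \dots, P_n$. Fixing $\um = (\mu_1, \dots, \mu_n)$ with $\mu_n = 0$ cuts out a locally closed stratum $Y_\um \subset \grt_{\p^n}$, and the key geometric observation is that the restricted family $Y_\um \to \p^n$ is a \emph{trivial} fibration, so $R^k\tilde{p}_*\C_{Y_\um}$ is constant for each $\um$ and each $k$. Because locally constant sheaves on $\p^n$ form an abelian category closed under extensions, the stratification spectral sequence then forces $R^k\tilde{p}_*\C_{\grt_{\p^n}}$ to be locally constant, and simple connectivity of $\p^n$ upgrades this to constant. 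This is the argument the paper gives (attributed to a suggestion of Braverman), and it sidesteps entirely the trivial-versus-nontrivial dichotomy you were trying to exploit; the condition $\mu_n = 0$ is simply built into the indexing of the strata. I would encourage you to replace the direct-summand paragraph with this stratification argument, since as you yourself note, ``most of the real work'' in your version was never done.
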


 We would like to thank Alexander Braverman for suggesting the following proof.
\begin{proof}
Since $ \p^n $ is simply connected, it suffices to show that the push forwards are locally constant.  Since the category of locally constant sheaves is an abelian category, it suffices to find a stratification $ Y_{\um} $ of $ \grt_{\p^n} $ such that each $ R^k\tilde{p}_* \C_{Y_\um} $ is locally constant.

Recall that isomorphism classes of $ G^\vee $ bundles on $ \p $ are given by dominant weights $ \mu $ of $ G $ (this is because every $ G^\vee $ bundle on $ \p $ admits a reduction to $ T^\vee $).  Hence for $ \um = (\mu_1, \dots, \mu_n = 0) $, we define
\begin{equation*}
Y_{\um} := \{ (P_1,\dots, P_n) \in \grt_{\p^n} : P_i \text{ has isomorphism type } \mu_i \text{ for } i = 1, \dots, n \}.
\end{equation*}

The family $ Y_\um \rightarrow \p^n $ is trivial and thus the pushforwards $ R^k \tilde{p}_* \C_{Y_\um} $ are constant sheaves.  Hence the result follows.
\end{proof}

By similar reasoning as in the paragraph before Proposition \ref{th:monod1}, we can use Proposition \ref{th:constant2} to see that the monodromy action of $\pi_1(\co_\ul)$ on the cohomology of the fibres of $p : \gr_{\co_\ul} \rightarrow \co_\ul $ factors through $ \Sigma_\ul $.   
We obtain the following description of the monodromy action on the middle homology  of the fibres of $ \gr_{\co_\ul} $.
\begin{Proposition}  \label{th:monod2}
Let $ x \in \Pnreg $.  There is an isomorphism $ H_{\mathrm{mid}}(\gr_x) \cong (V_{\lam_1} \otimes \cdots \otimes V_{\lam_n})^G $, which is compatible with the actions of $ \Sigma_\ul $ on both sides.
\end{Proposition}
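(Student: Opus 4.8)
The plan is to run, for the open spaces, the same argument that gave Proposition \ref{th:monod1}, and then to convert the resulting cohomology statement into the desired statement about middle homology. First I would record the open analogue of Proposition \ref{th:fibres}(i): the isomorphism $\widetilde{\Gr}^\ul_{\Xnreg} \cong \Gr^\ul_{\Xnreg}$ carries the ``$P$ trivial'' locus to the ``$P_n$ trivial'' locus, so $\grt_{\Pnreg} \cong \gr_{\Pnreg}$ (this is already implicit in the Cartesian square (\ref{eq:square2})). Consequently, for $x \in \Pnreg$ the fibre $\gr_x$ agrees with the fibre $\grt_x$ of the projection $\tilde p : \grt_{\p^n} \to \p^n$, and we are free to compute with the latter.

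Next I would use constancy to transport the computation to the small diagonal. By Proposition \ref{th:constant2} the sheaves $R^k\tilde p_* \C_{\grt_{\p^n}}$ are constant on the connected space $\p^n$, so all fibre cohomologies of $\tilde p$ are identified; in particular, for any $y \in \p$,
\begin{equation*}
H^{\mathrm{mid}}(\gr_x) = H^{\mathrm{mid}}(\grt_x) \cong H^{\mathrm{mid}}(\grt_{(y,\dots,y)}) \cong (V_{\lam_1}\otimes\cdots\otimes V_{\lam_n})^G,
\end{equation*}
where the last isomorphism is Proposition \ref{th:midhom}(ii) (recall $\grt_{(y,\dots,y)}$ retracts onto $m_\ul^{-1}(L_0)$, whose top cohomology is the middle cohomology of $\grt_{(y,\dots,y)}$). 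For the $\Sigma_\ul$-equivariance I would argue exactly as in Proposition \ref{th:monod1}: the constant sheaves $R^k\tilde p_*\C_{\grt_{\p^n}}$ are $\Sigma_\ul$-equivariant for the permutation action on $\p^n$, and the small diagonal is fixed by $\Sigma_\ul$, so the monodromy action of $\Sigma_\ul$ on $H^{\mathrm{mid}}(\gr_x)$ is intertwined, through the displayed chain, with the action of $\Sigma_\ul$ on $H^{\mathrm{top}}(m_\ul^{-1}(L_0))$ by permuting convolution factors. By the construction of the commutativity constraint on the spherical Hecke category (\cite[Section 5]{MVi}, Theorem \ref{th:geomsat}), this corresponds to permutation of tensor factors on $(V_{\lam_1}\otimes\cdots\otimes V_{\lam_n})^G$.

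Finally I would pass from cohomology to homology. Since $\gr_x$ is a smooth complex affine variety (for $x$ regular and all $\lam_i$ minuscule), its homology and cohomology in a fixed degree are finite-dimensional; over $\C$ the universal coefficient theorem gives $H_{\mathrm{mid}}(\gr_x;\C) \cong H^{\mathrm{mid}}(\gr_x;\C)^*$, compatibly with monodromy, since the homology monodromy representation is the dual of the cohomology one. Combining with the previous paragraph, $H_{\mathrm{mid}}(\gr_x) \cong \big((V_{\lam_1}\otimes\cdots\otimes V_{\lam_n})^G\big)^*$ with the dual of the permutation $\Sigma_\ul$-action. But $\Sigma_\ul$ is a product of symmetric groups, so each of its complex representations is self-dual; hence $H_{\mathrm{mid}}(\gr_x) \cong (V_{\lam_1}\otimes\cdots\otimes V_{\lam_n})^G$ as $\Sigma_\ul$-representations, as claimed.

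The only genuinely delicate point I anticipate is the bookkeeping of the $\Sigma_\ul$-action through this chain of identifications, in particular checking that parallel transport in the constant sheaf from a regular point $x$ to the (fixed) small-diagonal point $(y,\dots,y)$ intertwines the monodromy action on $H^{\mathrm{mid}}(\gr_x)$ with the honest geometric $\Sigma_\ul$-action on the convolution fibre; once that is in place, everything else is a formal consequence of Propositions \ref{th:fibres}, \ref{th:constant2}, \ref{th:midhom} and Theorem \ref{th:geomsat}.
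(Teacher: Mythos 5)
Your proof follows essentially the same route as the paper: transport to the small diagonal via Proposition \ref{th:constant2}, invoke Proposition \ref{th:midhom} and Theorem \ref{th:geomsat}, and trace $\Sigma_\ul$-equivariance through the commutativity constraint. One organizational difference: the paper deduces equivariance from the commutative square relating $H^*(\Gr^\ul_x) \cong V_{\lambda_1}\otimes\cdots\otimes V_{\lambda_n}$ to $H^{\mathrm{mid}}(\gr_x) \cong (V_{\lambda_1}\otimes\cdots\otimes V_{\lambda_n})^G$ together with Proposition \ref{th:monod1} (top arrow $\Sigma_\ul$-equivariant by \ref{th:monod1}, right arrow obviously so, left arrow $\pi_1(\co_\ul)$-equivariant), rather than re-invoking the commutativity constraint directly; this is cleaner and also sidesteps your slightly imprecise phrase ``the action of $\Sigma_\ul$ on $H^{\mathrm{top}}(m_\ul^{-1}(L_0))$ by permuting convolution factors'' --- there is no geometric permutation action on the convolution fibre, that action is \emph{defined} via the fusion/monodromy construction, so one should be careful not to phrase the argument circularly. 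Your final self-duality step converting $H^{\mathrm{mid}}$ to $H_{\mathrm{mid}}$ is correct; the paper tacitly identifies the two, and stating the result in cohomology as in Proposition \ref{th:midhom} avoids needing self-duality of $\Sigma_\ul$-representations at all.
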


\begin{proof}
Using Proposition \ref{th:constant2} and Proposition \ref{th:midhom}, we obtain the isomorphism 
\begin{equation*} H_{\mathrm{mid}}(\gr_x) = H_{\mathrm{mid}}(\grt_x) \cong H_{\mathrm{mid}}(\grt_{(y, \dots, y)})
\end{equation*}
where $ y \in \p $.  

By the last statement of Proposition \ref{th:geomsat} we  obtain a commutative square
\begin{equation*}
\begin{CD}
H^*(\Gr^\ul_x) @>\sim>> V_{\lambda_1} \otimes \cdots \otimes V_{\lambda_n} \\
@VVV @VVV \\
H^{mid}(\gr_x) @>\sim>> (V_{\lambda_1} \otimes \cdots \otimes V_{\lambda_n})^G
\end{CD}
\end{equation*}
By Proposition \ref{th:monod1}, the top horizontal arrow is $ \Sigma_\ul $ equivariant.  The right vertical arrow is clearly $ \Sigma_\ul $ equivariant and the left vertical arrow is clearly $ \pi_1(\co_\ul) $ equivariant.  Hence we conclude that the bottom horizontal arrow is $ \Sigma_\ul $ equivariant as desired.
\end{proof}

\begin{Remark}
It would be interesting to generalize the construction of $\gr_{\p^n}$ to arbitrary curves $ X $.  To do so, one needs to impose a condition on a bundle $ P $.  The two natural choices are $ P $ is trivial and $P $ is semistable.  Imposing that $ P $ is trivial would give us too small a space and imposing that $ P $ is semistable seems to give too big a space (I believe we lose the fact that the fibres are affine).
\end{Remark} 

\section{Semilocal geometry}
The purpose of this section is to establish the behaviour of our fibration when two points come together.  The basic tool is the factorization property of the Grassmannians.  We begin by stating this property which is due to Beilinson-Drinfeld.

\begin{Theorem}\cite[section 5.3.10]{BD}
Let $ X $ be any smooth curve.  Let $ J_1, J_2 $ be a disjoint decomposition of $ (1, \dots, n) $.  Let $ U $ denote the set of points $[x_1, \dots, x_n ] $ in $ X^\ul $ such that $ x_{j_1} \ne x_{j_2} $ if $ j_1 \in J_1 $ and $ j_2 \in J_2 $.  Then there is an isomorphism
\begin{equation*}
\Gr^\ul_U \cong (\Gr^{\ul^1}_{X^{J_1}} \times \Gr^{\ul^2}_{X^{J_2}}) |_U 
\end{equation*}
where $ \ul^i $ consists of those $ \lambda_j $ with $ j \in J_i $.  This isomorphism is compatible with the two projections to $ U $.
\end{Theorem}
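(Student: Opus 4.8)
The plan is to prove this by an explicit gluing construction, exploiting the fact that over $U$ the modifications prescribed by the indices in $J_1$ are supported along a divisor disjoint from that of the modifications prescribed by $J_2$; Hecke modifications supported at disjoint loci can be decoupled, and conversely composed. To obtain an isomorphism of schemes rather than a bijection on points I would carry out the construction functorially, on $S$-points for an arbitrary test scheme $S$; below I describe it for $S$ a point, every step being manifestly functorial in $S$. As notation, working at the level of the ordered parameter space $X^{J_1}\times X^{J_2}=X^n$ (the assertion over $X^\ul$ then following by passing to $\Sigma$-quotients), pulling back the tautological divisors produces on $X\times S$ two relative effective $\Lambda_+$-coloured divisors $D_1$, built from the points and colours indexed by $J_1$, and $D_2$, built from those indexed by $J_2$; the defining condition on $U$ says exactly that $\supp(D_1)$ and $\supp(D_2)$ are disjoint closed subsets of $X\times S$. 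Hence $V_1:=(X\times S)\smallsetminus\supp(D_2)$ and $V_2:=(X\times S)\smallsetminus\supp(D_1)$ form an open cover of $X\times S$, with $V_1\cap V_2=(X\times S)\smallsetminus(\supp(D_1)\cup\supp(D_2))$, and $V_1$ is a neighbourhood of $\supp(D_1)$ while $V_2$ is a neighbourhood of $\supp(D_2)$.

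The forward map $\Gr^\ul_U\to(\Gr^{\ul^1}_{X^{J_1}}\times\Gr^{\ul^2}_{X^{J_2}})|_U$ forgets one modification at a time. Given $(P,\phi)$ with $\phi$ a trivialization over $V_1\cap V_2$, let $P_1$ be glued from $P|_{V_1}$ over $V_1$ and the trivial bundle $P_0|_{V_2}$ over $V_2$, with transition $\phi|_{V_1\cap V_2}$; then $\phi|_{V_1\cap V_2}$ extends tautologically to a trivialization $\phi_1$ of $P_1$ over all of $V_2=(X\times S)\smallsetminus\supp(D_1)$. On a punctured neighbourhood of any point of $\supp(D_1)$ chosen inside $V_1$ and avoiding the other modification points, $(P_1,\phi_1)$ agrees with $(P,\phi)$; since the Hecke type of a trivialization at a point depends only on its germ on a punctured neighbourhood of that point --- this is precisely the locality established in the previous section --- $\phi_1$ has Hecke type $\le D_1$ along $\supp(D_1)$ and Hecke type $0$ elsewhere, so $(P_1,\phi_1)\in\Gr^{\ul^1}_{X^{J_1}}$. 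Symmetrically, swapping the roles of $V_1$ and $V_2$, one produces $(P_2,\phi_2)\in\Gr^{\ul^2}_{X^{J_2}}$, and the pair $((P_1,\phi_1),(P_2,\phi_2))$ is the image of $(P,\phi)$.

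For the inverse I would compose the two modifications. Given $(P_1,\phi_1)$ and $(P_2,\phi_2)$ over the same point of $U$, let $P$ be glued from $P_1|_{V_1}$ over $V_1$ and $P_2|_{V_2}$ over $V_2$ with transition $\phi_2\circ\phi_1^{-1}$ over $V_1\cap V_2$ (both $\phi_i$ are isomorphisms there), and let $\phi$ be the trivialization of $P$ over $V_1\cap V_2$ that restricts to $\phi_1$ in the $V_1$-chart and to $\phi_2$ in the $V_2$-chart. Near $\supp(D_1)$ this $(P,\phi)$ coincides with $(P_1,\phi_1)$ and near $\supp(D_2)$ with $(P_2,\phi_2)$, so by the same locality $\phi$ has Hecke type $\le D_1(x)$ at $x\in\supp(D_1)$, $\le D_2(x)$ at $x\in\supp(D_2)$, and $0$ elsewhere; since $D_1+D_2$ is the coloured divisor attached to $\ul$ at this point, $(P,\phi)\in\Gr^\ul_U$. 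Unwinding the two gluing recipes shows at once that they are mutually inverse; neither alters the underlying point of $U$, so the isomorphism is compatible with the projections to $U$; and since each map is described by a functorial gluing along an open cover, it is a morphism of schemes. Finally, the two constructions are equivariant for $\Sigma_{\ul^1}\times\Sigma_{\ul^2}$, which yields the statement over $X^\ul$ by descent.

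I expect the only real subtlety to be the Hecke-type bookkeeping: at each of the relevant checks one must invoke precisely the fact that Hecke type at $x$ is determined by the germ of the trivialization on a punctured neighbourhood of $x$, together with the disjointness built into $U$, in order to conclude that decoupling or composing modifications supported at disjoint loci leaves every Hecke condition intact. Everything else --- that $V_1,V_2$ cover $X\times S$, that the glued objects are well defined, that the two constructions invert one another, and that the maps are morphisms --- is formal.
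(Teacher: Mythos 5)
The paper does not prove this theorem --- it is quoted from Beilinson--Drinfeld \cite[section 5.3.10]{BD} with no argument supplied --- so there is nothing in the paper to check you against line by line. That said, your sketch is correct and is essentially the standard proof of factorization. Over $U$ the coloured divisors $D_1$ and $D_2$ have disjoint supports, so $V_1 = (X\times S)\smallsetminus\supp(D_2)$ and $V_2 = (X\times S)\smallsetminus\supp(D_1)$ give a Zariski open cover of $X\times S$; Hecke modifications supported on disjoint closed loci can be decoupled (by regluing $P$ against the trivial bundle on the complementary open) and recomposed (by gluing $P_1|_{V_1}$ to $P_2|_{V_2}$ along $V_1\cap V_2$ via $\phi_2\circ\phi_1^{-1}$); and all Hecke-type constraints survive because, as the paper establishes in Section 2.2, the Hecke type of $\phi$ at a point depends only on the germ of $(P,\phi)$ on a punctured neighbourhood of that point, and your regluings leave that germ unchanged at every relevant point. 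The two recipes visibly invert one another and commute with the projection to the base.

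One point I would tighten in a full write-up is the passage between $X^n$ and the quotients: the condition cutting out $U\subset X^\ul$, and the asserted decomposition over $X^{J_1}\times X^{J_2}$, are well-posed only when $\Sigma_\ul$ is compatible with the partition $\{J_1,J_2\}$ (so that $\Sigma_\ul$ sits inside $\Sigma_{\ul^1}\times\Sigma_{\ul^2}$ up to a possible swap of the two blocks), and the descent of your $\Sigma_{\ul^1}\times\Sigma_{\ul^2}$-equivariant isomorphism over $X^n$ to the stated isomorphism over $X^\ul$ should be spelled out under that hypothesis. Apart from this bookkeeping, the argument is complete and agrees with the proof in \cite{BD}.
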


Note that we have already implicitly used a special case of this Theorem in Proposition \ref{th:fibres}.(i).

\subsection{Semilocal geometry in the global singular Grassmannian}
For $ \lambda \in X_+ $, let $ \lambda^\vee = -w_0(\lambda) $, where $ w_0 $ is the long element of the Weyl group.  Equivalently, we have $ V^{\lambda^\vee} = (V^\lambda)^\vee $.

Let us assume that $ \mu = \lam_i = \lam_{i+1}^\vee $.  Consider $ x = [x_1, \dots, x_n] \in \p^\ul $ with $y =  x_i = x_{i+1} $ and $ x_j \ne x_k $ otherwise.  By the factorization property of the Grassmannians, we have that $ \Gr_x^\ul \cong \Gr^{d_i(\ul)}_{d_i(x)} \times \Gr^{(\mu, \mu^\vee)}_{(y,y)} $ where $ d_i $ deletes the $ i $ and $ i+1$ entries of a list. 

The variety $\Gr^{(\mu, \mu^\vee)}_{(y,y)} $ is isomorphic to $ \overline{\Gr^{\mu + \mu^\vee}} $.  It is singular with a stratum for each $ \nu $ such that $ V_\nu $ appears in $ V_\mu \otimes V_\mu^\vee $.  In particular, we have a distinguished point which corresponds to the trivial bundle with trivial isomorphism.  This gives a copy of $\Gr^{d_i(\ul)}_{d_i(x)}$  embedded in $\Gr_x^\ul $ as the locus of $ (D, P, \phi) $ where the isomorphism $ \phi $ extends over $ y $ (equivalently, has Hecke type $0 $ at $y $).  

\subsection{Semilocal geometry in the open singular Grassmannian}
Now, let us pass to the open singular Grassmannian (the main object of study).  The singular fibre $ \gr_x $ does not factor as a product in this case.  However, we do have a copy of $ \phantom{}_0 \Gr^{d_i(\ul)}_{d_i(x)} $ embedded  in $ \gr_x $ as the locus of $ (D, P, \phi) $ where $ \phi $ extends over $ y$.  More generally we have the following ``semilocal geometry'' which matches Lemma 3.5 of \cite{M} and Lemma 21 of \cite{SS}.

Let $ r $ be chosen such that $ r \le |x_j - y| $ for all $ j $.  Let $ B \subset \p^\ul $ be a disc corresponding to the points of the form $ [x_1, \dots, y - u, y + u, \dots, x_n] $, where $ |u | < r$ and let $ B' \subset (\p)^{(\mu, \mu^\vee)} $ be the points of the form $ [y-u, y+u] $, for $ |u | < r $.

\begin{Lemma} \label{th:semilocal}
There exists an open neighbourhood of $ \phantom{}_0 \Gr^{d_i(\ul)}_{d_i(x)} $ in $ \gr_B $ and an isomorphism $ \psi $ of this neighbourhood with a neighbourhood of $ \phantom{}_0 \Gr^{d_i(\ul)}_{d_i(x)}  $ in $ \phantom{}_0 \Gr^{d_i(\ul)}_{d_i(x)} \times \phantom{}_0 \Gr^{(\mu, \mu^\vee)}_{B'} $ such that the following diagram commutes
\begin{equation*}
\begin{CD}
\gr_B @>\psi>> \phantom{}_0 \Gr^{d_i(\ul)}_{d_i(x)} \times \phantom{}_0 \Gr^{(\mu, \mu^\vee)}_{B'} \\
@VVV @VVV \\
B @>\sim>> B'
\end{CD}
\end{equation*}
\end{Lemma}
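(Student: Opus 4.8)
The plan is to deduce the statement directly from the Beilinson--Drinfeld factorization theorem, together with the definitions of the open loci. Apply that theorem to $X = \p$ with $J_1 = \{i, i+1\}$, so that $\ul^1 = (\mu, \mu^\vee)$, and $J_2 = \{1, \dots, n\} \smallsetminus \{i,i+1\}$, so that $\ul^2 = d_i(\ul)$. The hypothesis $r \le |x_j - y|$ guarantees that for every point $[x_1, \dots, y-u, y+u, \dots, x_n]$ of $B$ the two moving points $y \pm u$ are distinct from all of the frozen points $x_j$, $j \in J_2$; hence $B$ lies in the open locus $U$ of the theorem. Restricting the factorization isomorphism over $B$ (and observing that the $J_2$-coordinates stay equal to $d_i(x)$) gives a canonical isomorphism
\begin{equation*}
\Gr^\ul_B \;\cong\; \Gr^{d_i(\ul)}_{d_i(x)} \times \Gr^{(\mu, \mu^\vee)}_{B'},
\end{equation*}
compatible with the projections to $B$ and to $B'$ under the evident identification $B \cong B'$. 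Under this isomorphism a point of $\Gr^\ul_B$ corresponds to a pair $\big((P_2, \phi_2), (P_1, \phi_1)\big)$ in which $(P_2, \phi_2)$ is a modification of the trivial bundle supported at $d_i(x)$ and $(P_1, \phi_1)$ one supported at $\{y - u, y+u\}$, the global bundle $P$ being reconstructed by gluing the two modifications over the complement of their combined support; write $P = P_1 \times_{P_0} P_2$ for this bundle. The reconstruction is unital: at the distinguished point where the modification at $\{y-u,y+u\}$ is trivial, i.e.\ $(P_1, \phi_1) = (P_0, \mathrm{id})$, one has $P = P_2$.

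I would then transport the two relevant subvarieties through this isomorphism. Since $\gr_B$ is by definition the open subvariety of $\Gr^\ul_B$ where $P$ is trivial, it corresponds to $\{(P_1, P_2) : P_1 \times_{P_0} P_2 \text{ is trivial}\}$; since $\phantom{}_0\Gr^{d_i(\ul)}_{d_i(x)}$ and $\phantom{}_0\Gr^{(\mu,\mu^\vee)}_{B'}$ are by definition the open loci in $\Gr^{d_i(\ul)}_{d_i(x)}$ and $\Gr^{(\mu,\mu^\vee)}_{B'}$ where the corresponding bundle is trivial, their product corresponds to $\{(P_1, P_2): P_1 \text{ and } P_2 \text{ are both trivial}\}$. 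Both of these are open subvarieties of $\Gr^{d_i(\ul)}_{d_i(x)} \times \Gr^{(\mu,\mu^\vee)}_{B'}$, and both contain the copy $Z$ of $\phantom{}_0\Gr^{d_i(\ul)}_{d_i(x)}$ from the statement: under the factorization isomorphism $Z$ becomes $\phantom{}_0\Gr^{d_i(\ul)}_{d_i(x)} \times \{L_0\}$ (with $L_0$ the trivial-modification point, which sits over $u = 0$), and at a point of $Z$ one has $P_1 = P_0$ trivial, $P_2$ trivial because $(P_2, \phi_2) \in \phantom{}_0\Gr^{d_i(\ul)}_{d_i(x)}$, and $P = P_2$ trivial by unitality.

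Finally, take $W$ to be the intersection of these two open subvarieties. Then $W$ is an open neighbourhood of $Z$ contained in both $\gr_B$ and $\phantom{}_0\Gr^{d_i(\ul)}_{d_i(x)} \times \phantom{}_0\Gr^{(\mu,\mu^\vee)}_{B'}$, so it serves at once as an open neighbourhood of $\phantom{}_0\Gr^{d_i(\ul)}_{d_i(x)}$ in $\gr_B$ and as one in the product; we let $\psi$ be the restriction of the factorization isomorphism to $W$. The required square commutes because the factorization isomorphism intertwines the two projections to the base, and $\psi$ restricts to the identity on $Z$, matching the analogous lemmas in \cite{M} and \cite{SS}. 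Everything here is formal once the factorization theorem of \cite{BD} is invoked; the one point that needs care is the unitality of the reconstruction $P = P_1 \times_{P_0} P_2$, which is exactly what places $Z$ inside $\gr_B$ and lets the two open subvarieties share the neighbourhood $W$, and I do not expect any serious obstacle beyond unwinding that reconstruction.
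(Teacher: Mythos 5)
Your proof is correct and follows essentially the same route as the paper: invoke Beilinson–Drinfeld factorization over $B$ to get $\Gr^\ul_B \cong \Gr^{d_i(\ul)}_{d_i(x)} \times \Gr^{(\mu,\mu^\vee)}_{B'}$, then restrict to the intersection of the two open loci (the one where $P$ is trivial and the one where $P_1, P_2$ are both trivial), which is a common open neighbourhood of $\phantom{}_0\Gr^{d_i(\ul)}_{d_i(x)}$ on both sides. Your write-up is more explicit than the paper's in explaining, via unitality of the gluing, why the embedded copy lies in both open loci — the paper simply asserts this "by construction" — but the idea and the construction of the neighbourhood and the isomorphism $\psi$ are the same.
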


In \cite{SS} and \cite{M}, the corresponding Lemma is used to construct a Lagrangian $M' $ in $\gr_x $ from a Lagrangian $ M $ in $ \phantom{}_0 \Gr_{d_i(x)}^{d_i(\ul)} $ via a relative vanishing cycle construction (see \cite[section 4.4]{M}).  In these cases the Lagrangian $M'$ was diffeomorphic to $ \mathbb{P}^{m-1} \times M $.  In our case, we expect that a similar construction will exist with $ \mathbb{P}^{m-1} $ replaced by $ m_{(\mu, \mu^\vee)}^{-1}(L_0) = \Gr^\mu $, which is a cominuscule partial flag variety for the group $ G^\vee $.  

This construction can be interpreted as a functor from the derived Fukaya category of $ \phantom{}_0 \Gr_{d_i(x)}^{d_i(\ul)} $ to that of $ \gr_x $ (see \cite{R}).  This functor should categorify the map
\begin{equation*}
\phantom{}_0 V^{d_i(\ul)} = (V^{\lambda_1} \otimes \cdots \otimes V^{\lambda_n})^G \rightarrow (V^{\lambda_1} \otimes \cdots \otimes V^\mu \otimes V^{\mu^\vee} \otimes \cdots \otimes V^{\lambda_n})^G = \phantom{}_0V^{\ul}.
\end{equation*}

\begin{proof}
By the factorization property, we have, as above, an isomorphism
\begin{equation*}
\psi : \Gr_B^\ul \cong \Gr^{d_i(\ul)}_{d_i(x)} \times \Gr^{(\mu, \mu^\vee)}_{B'}.
\end{equation*}
Now, we let $ V \subset \Gr_B^\ul $ be defined as the intersection of $ \gr_B $ with $ \psi^{-1} \big( \phantom{}_0 \Gr^{d_i(\ul)}_{d_i(x)} \times \phantom{}_0 \Gr^{(\lambda, \lambda^\vee)}_{B'} \big) $.  By construction $ V $ is a neighbourhood of $\phantom{}_0 \Gr^{d_i(\ul)}_{d_i(x)} $ on each side.  The restriction of $ \psi $ to $ V $ fulfills the hypotheses.
\end{proof}

\section{Comparison with resolution of slices}
\subsection{Slices and their resolutions}
Fix a positive integer $ N = mk$.  We will study slices inside $ \mathfrak{gl}_N $.  

Consider the nilpotent matrix $ E_{m,k} $ which consists of $ k -1 $ copies of the $m\times m $ identity matrix arranged below the diagonal.  
\begin{equation*}
\begin{pmatrix}
0 & 0 & 0 & 0 \\
I & 0 & 0 & 0 \\
0 & I & 0 & 0 \\
0 & 0 & I & 0 
\end{pmatrix}
\end{equation*}
This is the matrix for the linear operator $ z $ acting on $ \C[z]^m / z^k \C[z]^m $ with respect to the usual basis $ e_1, \dots, e_m, z e_1, \dots, z^{k-1} e_m $.

Let $ F_{m,k} $ be the matrix completing it to a Jacobson-Morozov triple.  Let $ S_{m,k} = E_{m,k} + ker( \cdot F_{m,k}) $.  So $ S_{m,k}$ is the set of block matrices consisting of identity matrices below the diagonal and arbitrary matrices on the right column.  In particular, it is an affine space.
\begin{equation*}
\begin{pmatrix}
0 & 0 & 0 & * \\
I & 0 & 0 & * \\
0 & I & 0 & * \\
0 & 0 & I & * 
\end{pmatrix}
\end{equation*}
Here each block is of size $ m \times m $.

Let $ \pi = (\pi_1, \dots, \pi_n) $ a sequence of integers with $ 1 \le \pi_i \le m-1 $ and $ \pi_1 + \dots + \pi_n = N $.  We can consider the partial flag variety $ \fl_\pi $ of type $ \pi $ (that means that the jumps are given by $ \pi $).  We now define the partial Grothendieck resolution $ \tg^\pi \subset \fl_\pi \times \mathfrak{gl}_N $ by
\begin{equation*}
\tg^\pi := \{ (Y, W_\bullet) : Y W_i \subset W_i \text{ and $ Y $ acts as a scalar on } W_i/W_{i-1} \}.
\end{equation*}
By recording the scalars with which $ Y $ acts on each successive quotient, we obtain a morphism $ \tg^\pi \rightarrow \mathbb{C}^n $.

Of course there is also a morphism $ \tg^\pi \rightarrow \mathfrak{gl}_N $.  Let $ \widetilde{S}_{m,k}^\pi $ denote the preimage of $ S_{m,k} $ inside $\tg^\pi $.

Let $ \mathfrak{gl}_N^\pi $ denote the image of $ \tg^\pi $ inside $ \mathfrak{gl}_N $. It consists of those matrices such that $\pi $ refines their partition of eigenvalues.  In particular any matrix in $ \mathfrak{gl}_N^\pi $ has at most $ n $ distinct eigenvalues.

Let $ \Sigma_\pi $ denote the stabilizer of $ \pi $ in $ \Sigma_n $ and let $ \mathbb{C}^\pi := \C^n/\Sigma_\pi $.  Let $ S_{m,k}^\pi $ denote the following variety.
\begin{equation*}
\begin{aligned}
S_{m,k}^\pi := \big\{ (Y, [x_1, &\dots, x_n]) : Y \in S_{m,k} \cap \mathfrak{gl}_N^\pi, [x_1, \dots, x_n] \in \C^\pi \\ 
&\text{ and } \{ x_1 \}^{\cup \pi_1} \cup \dots \cup \{x_n \}^{\cup \pi_n} \text{ are the eigenvalues of } Y \big\}
\end{aligned}
\end{equation*}

\subsection{Examples and relation with \cite{M}, \cite{SS}}
Consider the case $ m= 2$.  This forces $ \pi = (1, \dots, 1) $ and $ n=N $.  So $ \tg^\pi = \tg $ is the ordinary Grothendieck resolution.  This places no restriction on the matrices, so $ \mathfrak{gl}_N^\pi = \mathfrak{gl}_N $.  Also $ \Sigma_\pi = \Sigma_n $ and so $ \C^\pi = \C^n/\Sigma_n $.  We also see that $ S_{m,k}^\pi = S_{m,k} $.  This is precisely the slice considered by Seidel-Smith which maps down to the space of eigenvalues $ \C^n/\Sigma_n $.

Now, consider the case $ m $ arbitrary and $ \pi = (1, \dots, 1, m-1, \dots, m-1) $ where there are $ k$ 1s and $ k$ m-1s.  Then $ \Sigma_\pi = \Sigma_k \times \Sigma_k $ and $ \C^\pi = \C^{2k} / \Sigma_k \times \Sigma_k $ is the space of collections of ``thin'' and ``thick'' eigenvalues, to use the terminology of Manolescu.  The regular locus is the bipartite configuration space $BConf_k$ studied by Manolescu.  The fibres of $ S^\pi_{m,k} \rightarrow BConf_k $ is the fibration studied by Manolescu.  

Manolescu also develops the geometry in the same general framework that we do.  To continue the comparison of terminology, our $\pi $ is his $ \pi $, our $ \tg^\pi $ is his $ \mathfrak{g}^\pi $, our $\mathfrak{gl}_N^\pi $ is his $ \mathfrak{g}^\pi $, our $ \Sigma_\pi $ is his $ W^\pi $, our $\C^n $ is his $ \mathfrak{h}^\pi = \C^{s-1} $, our $ \C^\pi $ is his $ \mathfrak{h}^\pi/W^\pi $.  We are unable to reconcile his map $ \mathfrak{g}^\pi \rightarrow \mathfrak{h}^\pi/W^\pi $.  We don't believe that such a map exists and that is why we constructed the more complicated $ S_{m,k}^\pi $.

\subsection{Comparison theorem}
Now we take $ G = SL_m $, so $ G^\vee = PGL_m$.  We have the usual labelling $ \omega_1, \dots, \omega_{m-1} $ of minuscule weights for $ SL_m $.

Let $ \ul = (\lam_1, \dots, \lam_n) $ be such that $ \lam_1 + \dots + \lam_n $ is in the root lattice and each $ \lambda_i $ is minuscule.  For each $ i $, let $ \pi_i \in \{1, \dots, m-1 \} $ be such that $ \lam_i = \omega_{\pi_i} $.  Then, $ N := \pi_1 + \dots + \pi_n $ is divisible by $ m $ (in order so that $ \lam_1 + \dots + \lam_n $ is in the root lattice), so $ N = mk $ for some positive integer $ k $.  Note that $ \Sigma_\ul = \Sigma_\pi $ and $ \mathbb{C}^\ul = \mathbb{C}^\pi $ (here $ \C^\ul $ denotes the subvariety of $ \p^\ul $ where all points are in $ \C $).

The following result is due to Mirkovic-Vybornov \cite[Theorem 5.3]{MVy} and Ngo \cite[Lemma 2.3.1]{N}.

\begin{Theorem} \label{th:comp}
There exist isomorphisms $ \grt_{\mathbb{C}^n} \cong \widetilde{S}_{m,k}^\pi $ and $ \gr_{\mathbb{C}^\ul} \cong S_{m,k}^\pi $ which are compatible with the following two squares.
\begin{equation*}
\begin{CD}
\grt_{\mathbb{C}^n} @>>> \gr_{\mathbb{C}^\ul} \\
@VVV @VVV \\
\mathbb{C}^n  @>>> \mathbb{C}^\ul
\end{CD}
\quad \quad  \quad
\begin{CD}
\widetilde{S}_{m,k}^\pi  @>>> S_{m,k}^\pi \\
@VVV @VVV \\
\mathbb{C}^n  @>>> \mathbb{C}^\pi
\end{CD}
\end{equation*}
\end{Theorem}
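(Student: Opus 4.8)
The plan is to make the two claimed isomorphisms explicit by translating our moduli-of-bundles description of the open convolution and singular Grassmannians over $\C$ first into a lattice description and then into a linear-algebra (``companion matrix'') description; at that point the assertion is exactly \cite[Theorem 5.3]{MVy} together with \cite[Lemma 2.3.1]{N}. All of the content is in setting up this dictionary and in checking that the conditions imposed on $\p$ cut out the Slodowy slice $S_{m,k}$ (and its partial Grothendieck resolution $\widetilde{S}_{m,k}^\pi$) rather than some other transverse slice in $\mathfrak{gl}_N$; once that is in place, compatibility with the horizontal maps (forgetting a flag) and with the vertical maps (recording eigenvalue data) is automatic.

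First I would pass from $G^\vee = PGL_m$ to $GL_m$. Since $\lam_1 + \dots + \lam_n$ lies in the root lattice, every bundle that occurs is topologically trivial, so on the relevant component $\Gr_{PGL_m}$ agrees with $\Gr_{SL_m}$; choosing a determinant line bundle identifies this, component by component, with a locus in $\Gr_{GL_m}$. Using the description of Hecke type on $\p$ from Proposition \ref{th:Hecketype} applied to the standard representation of $GL_m$, an $\omega_{\pi_i}$-modification of $PGL_m$-bundles at $x_i$ becomes a minuscule $GL_m$-modification of torsion length $\pi_i$ at $x_i$. Next, since in $\gr_{\C^\ul}$ and $\grt_{\C^n}$ both $P_0$ and $P = P_n$ are required to be trivial on all of $\p$, I would trivialize over the two standard affine charts of $\p$, as in the proof of Proposition \ref{th:constant2}, and read off the data over $\C$ as a $\C[z]$-lattice $M$ in $\C((z))^m$ with $\C[z]^m \subseteq M$, with $M/\C[z]^m$ torsion of total length $N = mk$ and of the prescribed minuscule types at the $x_i$, and --- this last point encoding the triviality at $\infty$ --- with $M$ moreover a free $\C[z^{-1}]$-module of rank $m$; after normalizing the determinant, $M$ is squeezed between $\C[z]^m$ and a fixed shift $z^{-k}\C[z]^m$. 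For $\grt_{\C^n}$ one obtains in addition a full chain $\C[z]^m = M_0 \subseteq M_1 \subseteq \dots \subseteq M_n = M$ with each $M_i/M_{i-1}$ a length-$\pi_i$ minuscule torsion module supported at $x_i$.

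The final step is the spectral construction. The $N$-dimensional space $V := \C[z]^m / z^k\C[z]^m$ carries the operator ``multiplication by $z$'', which in the basis used above is exactly the nilpotent $E_{m,k}$; transporting $M$ to $V$ and letting $Y$ be the operator induced by multiplication by $z$ on the associated $N$-dimensional space produces a point of $\mathfrak{gl}_N$ which lies in $S_{m,k}$ precisely because $M$ is sandwiched between $\C[z]^m$ and $z^{-k}\C[z]^m$ and is free over $\C[z^{-1}]$, and whose eigenvalues are the $x_i$ with multiplicities $\pi_i$ --- so in fact $Y \in S_{m,k} \cap \mathfrak{gl}_N^\pi$. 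The chain $M_\bullet$ becomes a $Y$-stable flag of type $\pi$ in $V$ on whose successive quotients $Y$ acts by the scalars $x_1, \dots, x_n$; that is, a point of $\tg^\pi$ lying over $S_{m,k}$, i.e.\ of $\widetilde{S}_{m,k}^\pi$. This gives $\grt_{\C^n} \cong \widetilde{S}_{m,k}^\pi$ and, after forgetting the flag, $\gr_{\C^\ul} \cong S_{m,k}^\pi$; the map $\grt_{\C^n} \to \gr_{\C^\ul}$ corresponds to $\widetilde{S}_{m,k}^\pi \to S_{m,k}^\pi$, the map to $\C^n$ records the scalars on the successive quotients and thus matches the eigenvalue map $\tg^\pi \to \C^n$, and passing to $\Sigma_\ul = \Sigma_\pi$-quotients gives the remaining square.

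I expect the main obstacle to be not any single construction but the bookkeeping needed to upgrade this to an isomorphism of schemes (rather than a bijection) that varies algebraically with $x$: matching the minuscule labels $\omega_{\pi_i}$ with the flag jumps $\pi_i$, the root-lattice hypothesis with $m \mid N$, the $PGL_m$-versus-$GL_m$ normalization of determinants, and --- most delicate --- verifying that the pair of triviality conditions on $\p$, and nothing more, is equivalent to $Y$ lying in the slice transverse to the orbit of $E_{m,k}$. Since precisely this isomorphism together with the two compatibilities is what \cite{MVy} and \cite{N} establish, I would present the above dictionary and then quote their theorems for the scheme-theoretic statement; specializing to $m = 2$ then recovers the Seidel--Smith fibration and specializing to $\pi = (1, \dots, 1, m-1, \dots, m-1)$ recovers Manolescu's fibration over $BConf_k$, as discussed above.
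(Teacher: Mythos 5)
Your approach is essentially the paper's: both pass from $PGL_m$-bundles with minuscule modifications to chains of $\C[z]$-lattices in $\C(z)^m$, and then identify the operator ``multiplication by $z$'' on an $N$-dimensional quotient with a point of the slice $S_{m,k}$ (and the chain with a flag in $\tg^\pi$), citing \cite{MVy} and \cite{N} for the scheme-theoretic statement. The cosmetic differences are the direction of the chain (you take $\C[z]^m = M_0 \subseteq \cdots \subseteq M_n$; the paper takes $L_0 = \C[z]^m \supset \cdots \supset L_n$) and the explicit detour through $GL_m$, which the paper absorbs into the step ``choose rank-$m$ vector bundles $V_1, V_2$ representing $P_1, P_2$.''

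The one place where your sketch is not quite right, and which you yourself flag as the delicate point, is the translation of triviality of $P$ on $\p$ into a lattice condition. As written, ``$M$ is a free $\C[z^{-1}]$-module of rank $m$'' is not well-posed: $M$ is by construction a $\C[z]$-submodule of $\C((z))^m$, and it cannot also be a $\C[z^{-1}]$-module. Also, the sandwich $\C[z]^m \subseteq M \subseteq z^{-k}\C[z]^m$ alone is not equivalent to triviality --- it holds for every point of the relevant closed Schubert variety, trivial bundle or not. The paper replaces both of these by a single clean condition on the chain, namely that $[e_1],\dots,[z^{k-1}e_m]$ be a basis of $\C[z]^m/L_n$, and then proves that this basis condition is equivalent to $V \cong \O(k)^{\oplus m}$ by writing down an explicit $\C[z]$-module isomorphism $\psi: L_n \to z^k\C[z]^m$ and checking it extends over $\infty$. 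That basis condition is also exactly what makes $Y = $ (multiplication by $z$ on $\C[z]^m/L_n$, in the preferred basis) land in $S_{m,k}$, since then $Y[z^ie_j] = [z^{i+1}e_j]$ for $i \le k-2$. If you tighten your triviality-at-$\infty$ condition to this basis criterion (or to the equivalent vector-bundle statement) and supply that short verification, your proof matches the paper's.
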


Though the theorem has already been proved by Ngo and Mirkovic-Vybornov, we provide an sketch proof for the convenience of the reader.

\begin{proof}
We will show that both $ \grt_{\mathbb{C}^n} $ and $ \widetilde{S}_{m,k}^\pi $ are isomorphic to the following space.
\begin{equation*}
\begin{aligned}
\phantom{}_0\mathcal{L}^\ul := \big\{ &L_0 = \C[z]^m \supset L_1 \supset \dots \supset L_n: L_i \text{ are free } \C[z] \text{ submodules of rank } m, \\
&\dim(L_{i-1}/L_i) = \pi_i, z \text{ acts by a scalar on } L_{i-1}/L_i,\text{ and}  \\
& [e_1], \dots [e_m], [ze_1], \dots [ze_m], \dots, [z^{k-1}e_1], \dots, [z^{k-1}e_m] \text{ is a basis} \text{ for } \C[z]^m/L_n  \big\}
\end{aligned}
\end{equation*}

First, suppose $ P_1, P_2 $ are two principal $ PGL_m $ bundles on $\p$ and $ \phi $ is an isomorphism between them away from a point $ x \ne \infty$ of Hecke type $ \omega_j $.  Then there exist rank $ m$ vector bundles $V_1, V_2 $ representing $ P_1, P_2 $ such that $ L_2 := \Gamma(\C, V_2) \subset L_1 := \Gamma(\C, V_1) $ and $ \dim (L_1/L_2) = j $.  Moreover, we see that $ (z-x)L_1 \subset L_2 $.  (This is the $ PGL_m $ version of Proposition \ref{th:Hecketype}.)

This allows us to define an isomorphism $ \widetilde{\Gr}^\ul_{\C^n} \rightarrow \mathcal{L}^\ul $, where $ \mathcal{L}^\ul $ is just like $ \phantom{}_0\mathcal{L}^\ul $ except without the last condition.

Hence it suffices to show that if a vector bundle $ V $ on $ \p $ has space of sections $ L= \Gamma(\C, V) \subset \C[z]^m $, then $ [e_1], \dots, [z^{k-1} e_m] $ is a basis if and only if $ V \cong \mathcal{O}(k)^{\oplus m} $ (this latter condition implies that $ \mathbb{P}(V) $ is trivial and in this case it is in fact equivalent to it).

To prove this claim, let us define an isomorphism $ V \rightarrow \mathcal{O}(k)^{\oplus m} $.  To do so it suffices to define an isomorphism $ \psi $ of $ \C[z] $ modules $ \psi: L \rightarrow z^k \C[z]^{\oplus m} $ which extends over $ \p$.  Let $ B $ be the set $ \{ e_1, \dots, z^{k-1} e_m \} $. For each $ i $ define $ p_i $ to be the unique element of $ \spn(B) $ such that $ p_i - z^k e_i \in L $ (such a $ p_i $ exists by the ``basis for quotient'' assumption).  Note that $ \{ z^k e_1 - p_1, \dots, z^ke_m - p_m \} $ forms a basis for $ L $ as a free $ \C[z] $ module.  Now, we define $ \psi $ to take $ z^k e_i - p_i $ to $ z^k e_i $.  Since it takes one basis to another, $ \psi $ is a isomorphism of $ \C[z] $ modules.  Finally, a simple calculation shows that $ \psi $ extends to a isomorphism over $ \p $.

Thus, we have proven that $ \grt_{\C^n} $ is isomorphic to $ \phantom{}_0\mathcal{L}^\ul $.  So it remains to construct the isomorphism between $ \phantom{}_0\mathcal{L}^\ul $ and $ \widetilde{S}_{m,k}^\pi $.

Given $ (L_0, \dots, L_n) \in \mathcal{L}^\ul $, we consider the linear operator $ Y $ defined as the action of $ z $ on the quotient $ L_0/L_n $.  Since we have a preferred basis for $ L_0/L_n $, we may identify $ L_0/L_n $ with $ \C^N $.  The sequence $ L_n/L_n, L_{n-1}/L_n, \dots, L_0/L_n $ gives us a flag $ W_\bullet $ of type $ \pi $ in $ \C^N $ such that $ Y $ preserves each subspace and acts as a scalar on each quotient.  Thus $ (Y, W_\bullet) $ is a point in $ \tg^\pi $.  Moreover, we see that $ Y [z^i e_j] = [z^{i+1}e_j] $ for each $ 0 \le i \le k-2 $ and all $j $.  Thus $ Y \in S_{m,k} $ as desired.  (Note that the condition that $ L_n = z^k \C[z]^m $ is equivalent to the condition $ Y = E_{m,k}$.)  Hence $ (Y, W_\bullet) $ lies in $ \widetilde{S}_{m,k}^\pi $.  An inverse map can easily be constructed by an explicit formula (see \cite[section 4.5]{MVy}).
\end{proof}

\section{Affine nature of the fibres}
As a final step, we would like to prove the following result.

\begin{Proposition}
Let $ x \in \Pnreg $.  Then $ \gr_x $ is an affine variety.
\end{Proposition}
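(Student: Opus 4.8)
The plan is to realize $\gr_x$ as a closed subvariety of an affine variety. The key point is that imposing triviality of the bundle $P$ (in fact, triviality of the associated $\mathbb{P}^{m-1}$-bundle, or more generally of the associated $G^\vee$-bundle) cuts out an affine piece of the global singular Grassmannian $\Gr^\ul_x$, because the trivial bundle sits in the ``big cell'' of the moduli stack of $G^\vee$-bundles on $\p$, and that big cell is an affine scheme. Concretely, I would first reduce to the case $G = SL_m$ (where $G^\vee = PGL_m$) using Theorem \ref{th:comp}: for $x \in \C^\ul$, we already have $\gr_x \cong$ a fibre of $S^\pi_{m,k} \to \C^\pi$, and such a fibre is a closed subvariety of $S_{m,k} \cap \mathfrak{gl}_N^\pi$, which is closed in the affine space $S_{m,k}$; hence it is affine. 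For general $x \in \p^\ul$ (points possibly at $\infty$), one can move $\infty$ away by choosing a coordinate on $\p$ sending $\supp(D)$ into $\C$, using that the construction of $\gr_x$ depends only on the abstract configuration, not on where $\infty$ is — so the $\C^\ul$ case suffices. For general reductive $G$ one argues directly as below.

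The direct argument I would give: fix $x = [x_1,\dots,x_n] \in \Pnreg$ and choose a coordinate $z$ on $\p$ with $x_i \in \C$ and $\infty \notin \supp(D)$. A point of $\gr_x$ is a pair $(P,\phi)$ with $P$ trivializable and $\phi$ an isomorphism of $P_0$ with $P$ away from the $x_i$, with bounded Hecke type. Restricting everything to $\C = \p \smallsetminus \infty$, the data $(P|_\C, \phi)$ is a point of the ind-scheme $\Gr_{\C^n, (x_1,\dots,x_n)}^{\le \ul}$ (a product of affine Grassmannian Schubert varieties at the $x_i$, which is a projective variety since the $\lambda_i$ are minuscule — or in general an ind-projective scheme cut down by the Hecke bounds). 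The extra condition is that $(P|_\C,\phi)$ extends to a trivializable bundle on all of $\p$; equivalently, the bundle $P$ obtained by gluing $P|_\C$ to the trivial bundle near $\infty$ via $\phi$ is trivial. By Proposition \ref{th:Hecketype}, triviality of $P$ is an open condition, and I claim it is in fact the condition of lying in a distinguished affine open. The cleanest way to see this: the map $\gr_x \to \Gr_x^\ul$ is an open immersion onto the locus $\{P \text{ trivial}\}$, and this locus is precisely the preimage of the open point $\{\text{trivial bundle}\} \subset \mathrm{Bun}_{G^\vee}(\p)$ under $\Gr_x^\ul \to \mathrm{Bun}_{G^\vee}(\p)$, which is the semistable locus; the semistable locus of $\mathrm{Bun}_{G^\vee}(\p)$ (for $\p$) is $BG^\vee$ near the trivial bundle, and pulling back along $\Gr_x^\ul \to \mathrm{Bun}_{G^\vee}(\p)$ exhibits $\gr_x$ as a $G^\vee$-torsor quotient of an affine; more usefully, one identifies $\gr_x$ with a fibre of the Beilinson-Drinfeld/Mirkovic-Vybornov style map to a ``slice'', which is manifestly affine.

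I think the cleanest self-contained route, and the one I would actually write, is: deduce it from Theorem \ref{th:comp} in the $SL_m$ case (handling $\infty$ by the coordinate change above), since there $\gr_x$ is visibly a closed subvariety of an affine space; and for general $G$, embed $G^\vee \hookrightarrow PGL_m$ (or $GL_m$) via a faithful representation, use that a $G^\vee$-bundle is trivial iff its associated $GL_m$-bundle is trivial (this needs the representation chosen so that it detects triviality on $\p$, which holds for a faithful representation by the reduction-to-$T^\vee$ classification of bundles on $\p$), realize $\gr_x$ as a closed subvariety of the corresponding open singular Grassmannian for $GL_m$ cut out by the condition that $\phi$ reduces to the $G^\vee$-structure, and invoke the $GL_m$ case. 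The main obstacle is the passage from ``$P$ trivial'' being merely an open condition to it being an affine open: for this I would lean on the explicit matrix model (the slice $S_{m,k}$, which is an honest affine space) rather than trying to prove affineness of the semistable locus of $\mathrm{Bun}_{G^\vee}(\p)$ abstractly. The subsidiary nuisances — checking that the coordinate change moving $\infty$ is harmless, and that a faithful representation detects triviality of $G^\vee$-bundles on $\p$ — are routine and I would dispatch them briefly.
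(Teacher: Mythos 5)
Your proposal matches the paper's proof essentially step for step: reduce to the $SL_m$/$PGL_m$ case via Theorem \ref{th:comp} (so that $\gr_x$ becomes a closed subvariety of the affine space $S_{m,k}$ cut out by fixing the eigenvalues), then handle general $G$ by embedding $G^\vee$ into $GL_m$ via a faithful representation preserving maximal tori and using that a $G^\vee$-bundle on $\p$ is trivial iff its associated $GL_m$-bundle is, which the paper isolates as Lemma \ref{th:triv} and proves by Grothendieck's reduction-to-$T$ classification exactly as you indicate. The digression through $\mathrm{Bun}_{G^\vee}(\p)$ and semistable loci is not needed and you correctly discard it; the route you commit to at the end is the paper's argument.
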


We do not know a truly satisfactory proof of this result.  Here is a slightly ad-hoc approach which uses the results of the previous section. 

\begin{proof}
We begin with the case of $ G^\vee = PSL_m $.  We may assume that $ x = (x_1, \dots, x_n) \in \Cnreg $, since the fibre does not depend on $ x $.  Then by Theorem \ref{th:comp}, $ \gr_x $ is the variety of matrices in $ S_{m,k} $ whose set of eigenvalues is $ \{ x_1 \}^{\cup \pi_1} \cup \dots \cup \{ x_n \}^{\cup \pi_n} $.  Since $ S_{m,k} $ is an affine space and imposing this set of eigenvalues is a closed condition, we see that $ \gr_x $ is an affine variety.  The varieties $ \gr_x $ for $ G^\vee = GL_m $ are the same as those for $ G^\vee = PSL_m $, so this establishes that case too.

Now suppose that $ G^\vee $ is arbitrary.  Pick a faithful representation $ \rho : G^\vee \rightarrow GL_m $ which takes the maximal torus $ T $ of $ G^\vee $ into the maximal torus $T_m $.  This gives us a map $ \rho : Bun_{G^\vee}(\p) \rightarrow Bun_{GL_m}(\p) $ and a closed embedding $ \rho : \Gr_{G^\vee,x}^\ul \rightarrow \Gr_{GL_m,x}^{\rho(\ul)} $ (here $ \rho(\ul) $ denotes the result of transforming $ \ul $ into a tuple of dominant coweights for $ GL_m $ via the map $ T \rightarrow T_m $).  By Lemma \ref{th:triv} below, we see that $ \rho^{-1}(\phantom{}_0\Gr_{GL_m, x}^{\rho(\ul)}) = \gr_{G^\vee, x} $ and hence $ \gr_{G^\vee, x} $ is a closed subvariety of $ \phantom{}_0\Gr_{GL_m, x}^{\rho(\ul)} $ and hence is affine.
\end{proof}

\begin{Lemma} \label{th:triv}
Let $ \rho : G^\vee \rightarrow GL_m $ be as above.  Let $ P $ be a principal $ G^\vee $ bundle on $ \p $.  Then $ P$ is trivial if and only if $ \rho(P) $ is trivial.
\end{Lemma}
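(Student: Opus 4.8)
The plan is to exploit the classification of principal bundles on $\p$ in terms of the torus. Recall that every principal $G^\vee$ bundle on $\p$ admits a reduction to the maximal torus $T^\vee$, so $P$ corresponds (up to the Weyl group action) to a cocharacter $\mu \in \Lambda$, and $P$ is trivial precisely when $\mu$ lies in the coroot lattice $Q$ of $G^\vee$, i.e. when $[\mu] = 0$ in $\pi_1(G^\vee) = \Lambda/Q$. Similarly, $\rho(P)$ is classified by the image cocharacter $\rho_*(\mu) \in \Lambda(T_m) \cong \Z^m$, and $\rho(P)$ is trivial if and only if $\rho_*(\mu) = 0$ in $\pi_1(GL_m) = \Z$, i.e. the sum of the components of $\rho_*(\mu)$ vanishes.

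The ``only if'' direction is immediate: if $P$ is trivial then $\rho(P)$ is associated to a trivial bundle, hence trivial. For the ``if'' direction, the key observation is that the topological type of $\rho(P)$ determines enough of the topological type of $P$. First I would reduce $P$ to $T^\vee$ and let $\mu \in \Lambda$ be the associated cocharacter, so that $\rho(P)$ reduces to $T_m$ via the cocharacter $\rho_*(\mu)$. The bundle $\rho(P) = \bigoplus_{j} \O(a_j)$ where $(a_1, \dots, a_m) = \rho_*(\mu)$ up to reordering (these are the weights of $\rho$ paired with $\mu$). If $\rho(P)$ is trivial, then in fact all $a_j = 0$: the only way $\bigoplus \O(a_j)$ can be a trivial vector bundle is if every summand is $\O$. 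This means $\langle \beta, \mu \rangle = 0$ for every weight $\beta$ of $\rho$. Since $\rho$ is faithful, its weights span $\Lambda^\vee \otimes \Q$ (the weight lattice of $G^\vee$ rationally), so $\mu = 0$, and therefore $P$ is trivial.

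The main obstacle is the step asserting that a direct sum of line bundles on $\p$ is trivial only when each summand is $\O$ — which is elementary (compare ranks of global sections, or use $h^0$ and $h^1$) — together with the slightly more delicate point that the weights of a faithful representation span the weight lattice rationally, so that $\langle \beta, \mu\rangle = 0$ for all such $\beta$ forces $\mu = 0$ (not merely $\mu \in Q$). This is where faithfulness of $\rho$ is genuinely used: a non-faithful $\rho$ could kill a central cocharacter and trivialize $\rho(P)$ while $P$ stays non-trivial. One should also be slightly careful that the reduction to $T^\vee$ is only well-defined up to the Weyl group, but since all the conditions involved ($P$ trivial, $\langle \beta, \mu\rangle = 0$ for all weights $\beta$ of $\rho$) are Weyl-invariant, this causes no difficulty.
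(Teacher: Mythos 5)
Your argument is fundamentally the same as the paper's: both reduce to the fact that on $\p$ a principal bundle for a reductive group is (holomorphically) trivial if and only if its reduction to the maximal torus is the zero cocharacter, and then use faithfulness of $\rho$ to transfer that vanishing from $GL_m$ back to $G^\vee$. The paper phrases the transfer as surjectivity of $X^*(T_m)\to X^*(T)$ (since $T\hookrightarrow T_m$ is a closed embedding of tori), whereas you phrase it as the weights of $\rho$ spanning $X^*(T)\otimes\Q$; these are the same fact, the paper's version being the integral (hence marginally sharper) statement, and either one closes the argument.

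There is, however, a genuine error in your opening paragraph. You assert that $P$ is trivial precisely when $[\mu]=0$ in $\pi_1(G^\vee)=\Lambda/Q$, and similarly that $\rho(P)$ is trivial iff $\rho_*(\mu)$ vanishes in $\pi_1(GL_m)=\Z$, i.e.\ the total degree is zero. This is the criterion for \emph{topological} triviality, not holomorphic triviality. It is false as stated: for $G^\vee=SL_2$ one has $\Lambda=Q$, so your criterion would declare every $SL_2$-bundle on $\p$ trivial, yet $\O(-1)\oplus\O(1)$ is not. The correct criterion, from Grothendieck's classification, is $\mu=0$ (equivalently, the $W$-orbit of $\mu$ is $\{0\}$, which again just means $\mu=0$). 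Fortunately your second paragraph quietly uses the correct criterion --- you correctly observe that $\bigoplus\O(a_j)$ is trivial only when every $a_j=0$, not merely when $\sum a_j=0$, and you conclude $\mu=0$ rather than $\mu\in Q$ --- so the argument does go through. But the proof as written contradicts itself: you should delete the $\pi_1$ formulation and state from the outset that triviality on $\p$ corresponds to the zero cocharacter. With that fix the proof is correct and matches the paper's in substance.
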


\begin{proof}
By Grothendieck's theorem, every $ G^\vee $ bundle on $ \p $ admits a reduction to $ T$.  Hence our bundle $ P $ comes from a $ T $ bundle $ P' $.  Note that $ P $ is trivial iff $ P' $ is trivial.   We may first turn $P' $ into a $ T_m $ bundle $ \rho_T(P') $ and then into the $ GL_m $ bundle $\rho(P) $.

Now, $ T$ bundles are determined by maps $ X(T) \rightarrow \Z $.  The bundle $ \rho_T(P') $ is determined by the transformed map $ X(T_m) \rightarrow X(T) \rightarrow \Z $.  Since $ T \rightarrow T_m $ is an embedding, $ X(T_m) \rightarrow X(T) $ is onto.

Hence we deduce
\begin{equation*}
\begin{aligned}
P \text{ is trivial } &\Longleftrightarrow P' \text{ is trivial } \Longleftrightarrow \text{ the map $ X(T) \rightarrow \Z $ is zero } \\
&\Longleftrightarrow \text{ the map $ X(T_m) \rightarrow X(T) \rightarrow \Z $ is zero } \\ &\Longleftrightarrow \rho_T(P') \text{ is trivial} \Longleftrightarrow \rho(P) \text{ is trivial}.
\end{aligned}
\end{equation*}
\end{proof}

\end{document}